\documentclass[12pt]{amsart}

\newtheorem{theorem}{Theorem}[section]
\newtheorem{lemma}[theorem]{Lemma}

\theoremstyle{definition}
\newtheorem{definition}[theorem]{Definition}

\theoremstyle{remark}
\newtheorem{remark}[theorem]{Remark}
\newtheorem{assumption}{Assumption}[section]

\numberwithin{equation}{section}

\usepackage{mathtools}
\usepackage{amsmath,amsfonts,amssymb}  
\usepackage{appendix}
\usepackage{enumitem}
\usepackage{hyperref}
\usepackage{cleveref}
\usepackage{amsrefs}

\crefname{Proposition}{proposition}{propositions}
\Crefname{Proposition}{Proposition}{Propositions}

\crefname{Corollary}{corollary}{Corollaries}
\Crefname{Corollary}{Corollary}{Corollaries}

\crefname{Result}{result}{results}
\Crefname{Result}{Result}{Results}

\crefname{Lemma}{lemma}{lemmas}
\Crefname{Lemma}{Lemma}{Lemmas}

\crefname{Remark}{remark}{remarks}
\Crefname{Remark}{Remark}{Remarks}

\crefname{Definition}{definition}{definitions}
\Crefname{Definition}{Definition}{Definitions}

\crefname{Example}{example}{examples}
\Crefname{Example}{Example}{Examples}

\newcommand{\Cov}{\mathsf{Cov}}

\newcommand{\Var}{\mathsf{Var}}
\newcommand{\E}{\mathsf{E}}

\newcommand{\Eof}[1]{\E\left(#1\right)}
\newcommand{\prob}{\mathsf{P}}

\newcommand{\cF}{{\mathcal F}}

\newcommand{\vk}{\varkappa}
\newcommand{\vf}{\varphi}
\newcommand{\ve}{\varepsilon}
\newcommand{\mbR}{{\mathbb R}}
\newcommand{\mbN}{{\mathbb N}}
\newcommand{\R}{\mbR}
\newcommand{\N}{\mbN}

\newcommand{\mbH}{{\mathbb H}}

\newcommand{\lplim}[1]{\mathop{L_p\text{-}\lim}\limits_{#1}\,}

\newcommand{\myExp}[1]{\exp\left(#1\right)}
\newcommand{\differential}[1]{\mathrm{d}#1}

\newcommand{\norm}[1]{\| #1 \|}

\newcommand{\vol}[1]{\mathsf{Vol}\left( #1 \right)}

\begin{document}

\author[O. Izyumtseva]{Olga Izyumtseva}
\address{Olga Izyumtseva, 
School of Mathematical Sciences, 
University of Nottingham, 
University Park, 
Nottingham NG7 2RD, 
United Kingdom
}
\thanks{Olga Izyumtseva was supported by the British Academy through grant number RaR{\textbackslash}100741, and in part by the British Academy, Cara and the Leverhulme Trust through grant LTRSF24{\textbackslash}100014.}
\email{Olga.Iziumtseva1@nottingham.ac.uk}
\urladdr{https://sites.google.com/view/olga-izyumtseva/home}

\author[W. R. KhudaBukhsh]{Wasiur R. KhudaBukhsh}
\address{Wasiur R. KhudaBukhsh, 
School of Mathematical Sciences, 
University of Nottingham, 
University Park, 
Nottingham NG7 2RD, 
United Kingdom
}
\thanks{Wasiur R. KhudaBukhsh was partially supported by the Engineering and Physical Sciences Research Council (EPSRC) of UK Research and Innovation
(UKRI) [grant number EP/Y027795/1]. 
}
\email{wasiur.khudabukhsh@nottingham.ac.uk}
\urladdr{https://www.wasiur.xyz/}

\subjclass[2020]{Primary 60G15, 60J55; Secondary 60H40}

\keywords{Self-intersection local times, Volterra Gaussian bridge, local times, Gaussian processes}

\title[Volterra Gaussian Bridges]{Self-intersection local times of Volterra Gaussian bridges with large end-to-end distance}

\begin{abstract}
  Self-intersection local times serve as the Hamiltonian in the Gibbs measure of continuous polymer models, for which the end-to-end distance is a key quantity. We study multiple self-intersection local times of Volterra Gaussian bridges, \emph{i.e.}, Volterra Gaussian processes conditioned on their final location $a$, as $\|a\|\to\infty$. For $d$-dimensional $(2,\zeta)$-locally nondeterministic processes with $\zeta\in(0,2/d)$, we prove that these local times exist in every $L^p$. Under two further conditions on the kernel, the $p$-th moment of the $m$-multiple self-intersection local time of the bridge is of exact order $\|a\|^{-\gamma}$ for every integer $p\geqslant1$, with $\gamma=p(m-1)(2-\zeta d)/(2-\zeta)$. In the plane, for kernels $k(t-s)$ with $k(0)\neq0$ and under strong local nondeterminism, we restrict the double self-intersection local time to $\{t_2-t_1\geqslant\|a\|^{-\alpha}\}$, $\alpha>0$, and establish a phase transition: its expectation decays like $\exp(-c\|a\|^{2-\alpha})$, converges to an explicit positive limit, or grows logarithmically, for $\alpha<2$, $\alpha=2$ and $\alpha>2$ respectively.
\end{abstract}

\maketitle

\tableofcontents

\section{Introduction}
\label{sec:Intro}
Let $(\Omega, \cF, \prob)$ be a (large enough) complete probability space.
In this paper, we study the asymptotics of moments of self-intersection local times for Volterra Gaussian bridges. The interest in this question arises from the study of mathematical polymer models and their geometric characteristics, in particular the end-to-end distance \cite{caravenna2011lectures}, \cite{desCloizeauxJannink1990}, \cite{hollander2009random}, \cite{teraoka2002polymer}, \cite{vdHofstadKonig2001}, \cite{van2001self}. See also \cite{Hofstader1997CLTEdwards}. It is well known that a continuous  polymer model can be constructed from a continuous stochastic process \cite{BockEtAl2015}, \cite{chen2009charged}, \cite{DoiEdwards1986}, \cite{GrothausEtAl2011}, \cite{Kusuoka1985}, \cite{MadrasSlade1993}, \cite{Westwater1980}, \cite{Westwater1981}, \cite{westwater1982edwards}. Because of the excluded volume effect of real polymers \cite{Edwards1965Polymers}, the polymer measure should assign small mass to trajectories with a large number of self-intersections. This can be achieved by first defining the self-intersection local time of the stochastic process and then taking as the polymer measure the Gibbs measure whose Hamiltonian is the self-intersection local time. 

The self-intersection local time for an $\mbR^d$-valued stochastic process $\{u(t):\ t\in[0,1]\}$ is formally defined as
\begin{align}
\label{eq:SILT_formal}
T_{m}^{u}\coloneq \int_{\Delta_m}\prod^{m-1}_{i=1}\delta_0(u(t_{i+1})-u(t_i))\differential{t_1}\cdots \differential{t_m},
\end{align}
where $$\Delta_m=\{(t_1,\ldots, t_m)\in[0,1]^m:\ 0\leqslant t_1\leqslant\ldots\leqslant t_m\leqslant 1\},\ m\geqslant 2,$$ and $\delta_0$ is the $d$-dimensional Dirac delta function at zero. 
The quantity $T_{m}^{u}$ registers times $t_1\leqslant t_2\leqslant \cdots \leqslant t_m$ such that $u(t_1)=u(t_2)=\cdots=u(t_m)$, \emph{i.e.}, $T_{m}^{u}$ measures the amount of time the process $u$ spends in small neighbourhoods of its self-intersection points of multiplicity $m$. To give a rigorous meaning to the formal expression \eqref{eq:SILT_formal}, consider the family of approximations
$\{T_{\ve,m}^{u} : \ve >0\}$ defined as 
$$
T_{\ve,m}^{u}\coloneq \int_{\Delta_m}\prod^{m-1}_{i=1}f_\ve(u(t_{i+1})-u(t_i))\differential{t_1}\cdots \differential{t_m},\ \ve>0,
$$
where the Gaussian density 
$$
f_{\ve}(y)\coloneq \frac{1}{(2\pi\ve)^{\frac{d}{2}}}\myExp{-\frac{\|y\|^2}{2\ve}},\ y\in\mbR^d, 
$$
as we know, converges to the Dirac delta function $\delta_0$ as $\ve \to 0+$.

\begin{definition}
\label{def:SILT} The random variable $T^{u}_m \coloneq \lplim{\ve\to0} T^{u}_{\ve,m}$ is called the $m$-multiple self-intersection local time of the stochastic process $u$ if the limit exists in $L^p(\Omega, \cF, \prob)$ for some $p\geqslant 2.$ 
\end{definition}

Self-intersection local times are an important geometric characteristic of non-smooth stochastic processes. The precise meaning of this sentence is found in the beautiful work of Le Gall \cite{LeGallWienersausage} in which he showed that the random variable that appears in the $m$-th term in the asymptotic expansion of the area of the Wiener sausage is precisely the $m$-multiple self-intersection local time. For a planar Wiener process, self-intersection local times were constructed in \cite{Varadhan1969,rosen1986renormalized,Dynkin1988}, see also \cite{LeGall1985,geman1984local}. For the fractional Brownian motion, self-intersection local times were studied in \cite{Rosen1987IntersectionFBM,HuNualart2005,HuNualart2007,HuNualartSong2008,NualartOrtizLatorre2007,OliveiraDaSilvaStreit2011,WuXiao2010,Hu2001fBmChaos,Gu2026derivativeSILTfBM}. For compactly perturbed Wiener processes, self-intersection local times were constructed in \cite{Dorogovtsev2011regularization}, for generalised Brownian bridges in \cite{DorogovtsevIzyumtseva2015bridges}. Recently, they have been studied for Volterra Gaussian processes in \cite{Izyumtseva2026VGP}, \cite{Izyumtseva2026StochascticFlowInteraction}.

In this paper, we study self-intersection local times for Volterra Gaussian bridges and their moments, which appear as conditional moments of self-intersection local times of Volterra Gaussian processes given the end-to-end distance. Volterra Gaussian processes form a large class of Gaussian processes containing several important examples, such as the Wiener process, the Brownian bridge (with the kernel $k(t,s)=\frac{1-t}{1-s}1_{[0,t]}(s)$), and the fractional Brownian motion (with the Molchan--Golosov kernel, see \cite{MolchanGolosov1969}, \cite{NorrosValkeilaVirtamo1999}, \cite{DecreusefondUstunel1999}). 
Let $\{w(t):\ t\in [0,1]\}$ be a one-dimensional Wiener process.  
\begin{definition}
\label{def:Volterra} A real-valued centred Gaussian process $\{u(t): \ t\in[0,1]\}$ is called a Volterra Gaussian process if it admits the representation
$$
u(t) = 
\int^t_0k(t,s)\differential{w(s)}, \quad t\in[0,1],$$ 
where $k\in L^2([0,1]^2)$ is a Volterra kernel, \emph{i.e.}, 
$k(t,s)=0$ for all $s>t$, and $$\sup_{t\in[0,1]}\int^t_0k^2(t,s)\differential{s}<\infty.$$
\end{definition}
It will be convenient to work with the white noise representation of Volterra Gaussian processes. Therefore, let us recall the definition of white noise in a Hilbert space (also called an isonormal Gaussian process, see \cite[Chapter 1]{Janson1997GHS}, \cite[Section 4]{Lifshits2012LecturesOnGaussianProcesses}, or \cite[Chapter 1]{Dalang2026SPDE}). Let $\mbH$ be a real separable Hilbert space with inner product $(\cdot,\cdot)$. 

\begin{definition}
  \label{def:white_noise} 
  A centred (mean zero) Gaussian random functional $\xi$ on $\mbH$ such that the mapping $h\mapsto \xi(h)$ is linear, and $\E({\xi(h)\xi(g)})=(h,g)$ for all $ h,\ g\in \mbH$ is called a white noise in $\mbH.$
\end{definition}
In this paper, we will use the dual pairing notation $(h,\xi)$ for $\xi(h)$.
Note that if $\xi$ is a white noise in $L^2([0,1])$ generated by the Wiener process $w$ from Definition \ref{def:Volterra}, then the Volterra representation of the process $u$ takes the form
$$
u(t)=(k(t,\cdot)1_{[0,t]},\xi),\ t\in[0,1].
$$

In general, a Volterra Gaussian process is neither a martingale nor a Markov process, but its representation as a stochastic integral of the Volterra kernel $k$ with respect to a Wiener process allows us to associate each Volterra Gaussian process with its kernel and to study the properties of the process via the properties of the kernel. Different kernels give processes with short- and long-range dependence, which are widely used in turbulence, finance, energy markets, medicine, and machine learning  (\emph{e.g.}, see \cite{BarndorffNielsen2004Spatiotemporal}, \cite{BarndorffNielsen2013Energy}, \cite{BarndorffNielsen2014Electricity}, \cite{BarndorffNielsen2007Ambit}). 
In this paper, we consider two classes of Volterra Gaussian processes satisfying two specific local-nondeterminism  conditions. The first class satisfies the following version of the $(\beta,\zeta)$-local nondeterminism condition of \cite[Section 4.1]{harang2022regularity} with $\beta=2$.

 \begin{definition}
 \label{def:LND}
  If the kernel $k$ of a Volterra Gaussian process $u$ satisfies 
   \begin{align}
\label{eq:LND} 
 \inf_{t\in(0,1]}\ \inf_{s\in[0,t)}\frac{1}{(t-s)^{\zeta}}\int^t_sk^2(t,r)\differential{r}>0
  \end{align}
  for some $\zeta>0,$ then the process $u$ is said to be $(2,\zeta)$-locally nondeterministic.
  \end{definition} 
For $s\in[0,1]$, put $\cF^w_s=\sigma(w(r),\ r\leqslant s),$ the filtration generated by the Wiener process $w$. Since $$u(t)=\int_0^sk(t,r)\differential{w(r)}+\int_s^tk(t,r)\differential{w(r)},$$ where the first summand is $\cF^w_s$-measurable and the second one is independent of $\cF^w_s$, we have $\int_s^tk^2(t,r)\differential{r}=\Var(u(t)\mid\cF^w_s)$. Thus, the condition in  \eqref{eq:LND} means that $\Var(u(t)\mid\cF^w_s)\geqslant c\,(t-s)^\zeta$ for all $0\leqslant s<t\leqslant 1$. 

  The second class of Volterra Gaussian processes we consider in this paper satisfies the strong local nondeterminism condition introduced in \cite{Dorogovtsev2011regularization}. The strong local nondeterminism property implies Berman's local nondeterminism property from \cite{Berman1973LND} (see also \cite{Berman1989LNDgeneral}, \cite{Berman1991Selfintersections} and \cite[Definition 4.3]{Izyumtseva2026VGP}).
  Let $x(t)=(g(t),\xi),\ t\in[0,1]$, be a Gaussian process in $\mbR$ defined via the function $g\colon[0,1]\to L^2([0,1])$ and a white noise $\xi$ in $L^2([0,1])$. For $0\leqslant s\leqslant t\leqslant 1$,  write $\Delta g(s,t)=g(t)-g(s)$, and for $(t_1,\ldots,t_m)\in\Delta_m$,  abbreviate $\Delta g(t_i)\coloneq\Delta g(t_i,t_{i+1})$ and $\Delta t_i\coloneq t_{i+1}-t_i$, $i=1,\ldots,m-1$. Let $G(e_1,\ldots,e_n)$ denote the Gram determinant of elements $e_1,\ldots,e_n$ of a Hilbert space. Consider 
  $G(\Delta g(t_1),\ldots,\Delta g(t_{m-1})),\ (t_1,\ldots,t_m)\in\Delta_m$, and assume that $G(\Delta g(t_1),\ldots,\Delta g(t_{m-1}))\neq 0$ for all $0\leqslant t_1<\ldots<t_m\leqslant 1$.

  \begin{definition}[Strongly locally nondeterministic process \cite{Dorogovtsev2011regularization}]
\label{def:strongLND} The process $x$ (or, equivalently, the function $g$) is said to be strongly locally nondeterministic if for every $m\geqslant 2$, every subset $M\subseteq \{1,\ldots, m-1\}$ and every fixed configuration of the remaining increments $\Delta t_i$, $i\notin M$, we have 
$$
G(\Delta g(t_1),\ldots,\Delta g(t_{m-1}))\sim G(\Delta g(t_i),\ i\notin M)\prod_{i\in M}\|\Delta g(t_i)\|^2, 
$$
as $\max_{i\in M}\Delta t_i\to0.$
\end{definition}
The strong local nondeterminism condition implies that the increments of the process $x$ over small time intervals are asymptotically independent, as are the increments of the Wiener process.  We note that various forms of strong local nondeterminism exist in the literature. 
It is, in fact, a common tool used in the study of sample path properties of stochastic processes. For instance, see \cite{Xiao2009Anisotropic} for their extensive use in deriving sample path properties of anisotropic Gaussian fields.

The notion of strong local nondeterminism is also related to the ``crinkled arc'' problem in a Hilbert space (see \cite[Problems 5--6, pp. 5--6]{Halmos1982Hilbert}), the ideas of which date back to Kolmogorov's Wiener spiral \cite{Kolmogoroff1940WienerSpiral,Kolmogoroff1940Kurven}, and ``screw lines'' of von Neumann and Schoenberg \cite{vonNeumann1941ScrewLine}. A simple (one-to-one) continuous curve $g\colon[0,1]\to\mbH$ in a Hilbert space $\mbH$ is called a crinkled arc if every two non-overlapping chords of the curve are orthogonal. Kolmogorov's spirals, and screw lines of von Neumann and Schoenberg demand stationarity as opposed to orthogonality. When $\mbH=L^2([0,1])$, it is elementary to check that the curve $g(t)=1_{[0,t]}$ is crinkled. It is a rather remarkable fact in functional analysis, originally discovered by Johnson in \cite{Johnson1970CrinkledArc} (see also \cite{Johnson1971HilbertSpaceProblemFour,Vitale1975CrinkledArc}; it is believed that Kolmogorov's congruence theorems about spirals remained unnoticed in the functional analysis literature) that any two crinkled arcs are ``essentially the same'' in the following precise sense. 

\begin{theorem}[{\cite[Problem 6]{Halmos1982Hilbert}; \cite{Johnson1970CrinkledArc}; see also \cite[Theorem 2.2]{Ettinger2008LinearNonlinearAcquisition}}]
  \label{thm:crinked_arc_uniqueness}
  Let $f: [0, 1] \to \mbH_1$, and $g: [0, 1] \to \mbH_2$ be two crinkled arcs in Hilbert spaces $\mbH_1$ and $\mbH_2$. Then, there exist elements $a \in \mbH_1$, and $b \in \mbH_2$ (for translation), a positive constant $c >0$ (for scaling), and a partial isometry $U: \mbH_1 \to \mbH_2$ such that
  \begin{align*}
    U(f(\rho(t)) - a ) = c g(t) - b, \quad t \in [0, 1],
  \end{align*}
  for some reparametrisation (an increasing homeomorphism) $\rho: [0, 1] \to [0, 1]$.
\end{theorem}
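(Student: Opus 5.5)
Normalise both arcs by translation so that $f(0)=0$ and $g(0)=0$. Then $a=f(0)$ and $b=g(0)$ (after adjusting $c$), and the chords are unchanged. The key observation is that a crinkled arc turns "length" into an additive measure. For $s<t<u$, the chords $f(t)-f(s)$ and $f(u)-f(t)$ do not overlap, so they are orthogonal, and Pythagoras gives
\begin{align*}
\|f(u)-f(s)\|^2=\|f(t)-f(s)\|^2+\|f(u)-f(t)\|^2 .
\end{align*}
So the function $\phi_f(t)\coloneq\|f(t)-f(0)\|^2$ is additive over intervals. It is continuous, strictly increasing (because $f$ is one-to-one), and satisfies $\|f(t)-f(s)\|^2=\phi_f(t)-\phi_f(s)$. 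Polarisation then gives every inner product in terms of $\phi_f$. For $s\le t$, split $f(s)$ into consecutive chords from $0$ and use orthogonality; this yields $(f(s),f(t))=\phi_f(s)$, that is,
\begin{align*}
(f(s),f(t))=\phi_f(\min(s,t)).
\end{align*}
This is exactly the covariance structure of $1_{[0,\phi_f(t)]}$ in $L^2$, i.e.\ of a time-changed Brownian motion.

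Next, reparametrise both arcs by arc-length-squared. Let $L_f=\phi_f(1)$ and $L_g=\phi_g(1)$, and define increasing homeomorphisms $\theta_f(t)=\phi_f(t)/L_f$ and $\theta_g(t)=\phi_g(t)/L_g$ of $[0,1]$. Set $\rho=\theta_f^{-1}\circ\theta_g$ and $c=\sqrt{L_f/L_g}$. Then for all $s,t\in[0,1]$,
\begin{align*}
(f(\rho(s)),f(\rho(t)))=L_f\,\theta_g(\min(s,t))=c^2\,(g(s),g(t)).
\end{align*}
Hence the families $\{f(\rho(t))\}$ and $\{c\,g(t)\}$ have identical Gram matrices.

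Finally, construct $U$ by the standard Gram-matrix argument. Define $U$ on finite linear combinations by
\begin{align*}
U\Bigl(\sum_i\lambda_i f(\rho(t_i))\Bigr)=c\sum_i\lambda_i g(t_i).
\end{align*}
Equality of Gram matrices shows that this is well defined (a combination of norm zero is sent to norm zero) and isometric. Extend $U$ by continuity to the closed span of $f([0,1])$, and set $U=0$ on the orthogonal complement; this makes $U$ a partial isometry with $Uf(\rho(t))=c\,g(t)$. Undoing the translations gives the stated identity with the original $a,b$.

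The main obstacle is the regularity of $\phi_f$, namely that it is a homeomorphism onto $[0,L_f]$. Continuity follows from continuity of $f$. Strict monotonicity needs $f(t)\neq f(s)$ for $s\neq t$, which is where simplicity (injectivity) of the arc is used. The rest is bookkeeping with additivity and polarisation.
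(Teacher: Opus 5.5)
The paper does not prove this result; it quotes it from Halmos, Johnson and Ettinger. Your argument is correct and is essentially the standard proof found in those sources. It runs through Pythagorean additivity of $\norm{f(t)-f(0)}^2$, then the identity $(f(s),f(t))=\phi_f(\min(s,t))$, then reparametrisation by the normalised $\phi$. Finally, $U$ is built from equal Gram matrices, extended by continuity, and set to zero on the orthogonal complement of the closed span.

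Two small bookkeeping points. With your normalisation, the translation in $\mbH_2$ is $b=c\,g(0)$ rather than $g(0)$. Also, if ``non-overlapping'' is read as excluding chords that share an endpoint, the orthogonality of adjacent chords still holds. It follows from the non-adjacent case by letting the gap shrink to zero and using continuity of $f$.
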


Therefore, in the language of Halmos, a strongly locally nondeterministic curve $g \in C([0, 1], L^2([0, 1])),$ the space of continuous functions from $[0, 1]$ to $L^2([0, 1])$, is ``crinkled'' at small scales in the sense that the increments of $g$ over small time intervals are asymptotically orthogonal, with $M = \{1, 2, \ldots, m-1\}$. The Volterra Gaussian process $u$ in Definition~\ref{def:Volterra} corresponds to the curve $g(t)(\cdot) = k(t, \cdot) 1_{[0, t]}(\cdot)$ in $L^2([0, 1])$. For $0\le s < t \le 1$, the increments of $g$ can be written as
\begin{align*}
  g(t) - g(s) = k(t, \cdot) 1_{(s, t]}(\cdot) + (k(t, \cdot) - k(s, \cdot)) 1_{[0, s]}(\cdot).
\end{align*}
Therefore, it is easy to see that  if the history part of the increment, $(k(t, \cdot) - k(s, \cdot)) 1_{[0, s]}(\cdot)$, disappears (for instance, if $k(t, r) = h(r)$ for some function $h\in L^2([0, 1])$ with $\int_s^t h^2(r)\differential{r}>0$ for all $0\le s < t\le 1$), then the curve $g$ is exactly crinkled. This precisely corresponds to the case of the (scaled) Wiener process. Under the strong local nondeterminism condition, this property of being crinkled is desired only at small timescales. This rather strong assumption leads to an interesting trichotomy (indeed a phase transition) in the asymptotic behaviour of the conditional moments of the double self-intersection local times in the planar case (Theorem~\ref{thm:5.1}). Before concluding this section, we summarise our main mathematical contributions.

\subsection{Main contributions}
\label{subsec:contributions}
In polymer models associated with a Volterra Gaussian process $u$,  a central geometric characteristic is the end-to-end vector $u(1)$. For such models, the conditional moments $\Eof{(T^u_m)^p\mid u(1)=a}$ describe how the number of self-intersections of a polymer depends on how far it is stretched. To the best of our knowledge, asymptotics of such conditional moments have not been studied in the literature for general Volterra Gaussian processes beyond the case of the Wiener process. Our main contributions in this paper are as follows.

\begin{enumerate}[label=\textup{(\arabic*)},leftmargin=*]
\item \emph{Volterra Gaussian bridges and existence of their self-intersection local times.} We work with the representation \eqref{eq:VGB} of Volterra Gaussian bridges obtained by projecting the kernel onto the orthogonal complement of $k(1,\cdot)$. For bridges of Gaussian processes and their orthogonal and canonical representations, see \cite{GasbarraSottinenValkeila2007,SottinenYazigi2014}. We prove that, for $(2,\zeta)$-locally nondeterministic kernels with $\zeta\in(0,\frac2d)$, their $m$-multiple self-intersection local times exist in $L^p(\Omega,\cF,\prob)$ for every $p\geqslant1$ and every $m\geqslant2$ (Theorem~\ref{thm:SILTVGB}).  The key tool is a lower bound for Gram determinants of integer combinations of values of $u$ (Lemma~\ref{lem:Gram}). 

\item \emph{Sharp polynomial decay of all conditional moments.} Under $(2,\zeta)$-local nondeterminism together with the conditions \eqref{eq:drift} and \eqref{eq:upper}, we prove the two-sided estimate
\[
\Eof{(T^u_m)^p\mid u(1)=a}\asymp\norm{a}^{-\gamma_{p,m}},\qquad \gamma_{p,m}=p(m-1)\,\frac{2-\zeta d}{2-\zeta},\qquad \norm{a}\to\infty,
\]
for all $p\in\mbN$, $m\geqslant2$ and $d\geqslant1$ (Theorem \ref{thm:4.2}). The exponent is optimal. For the fractional Brownian motion with $Hd<1$, the exponent equals $p(m-1)\frac{1-Hd}{1-H}$. In dimension one, it equals $p(m-1)$ for all $H\in(0,1)$. That is, the memory of the process does not affect the rate of decay. The proof rests on an estimate for integrals over products of simplices (Lemma~\ref{lem:gaps}), which is of independent interest.
\item \emph{The critical planar case and a phase transition.} For planar bridges generated by kernels of the form $k(t-s)$ with $k(0)\neq0$,  we first describe the Rosen renormalised self-intersection local times in Theorem~\ref{thm:Fourier_Wiener}. For the self-intersection local time over time domains satisfying $\{t_2-t_1\geqslant\norm{a}^{-\alpha}\}$, we obtain a sharp phase transition at $\alpha=2$ (Theorem \ref{thm:5.1}): the conditional expectation decays like $\exp(-c\norm{a}^{2-\alpha})$ for $\alpha<2$, converges to an explicit positive limit for $\alpha=2$, and grows like $\frac{\alpha-2}{2\pi k^2(0)}\log\norm{a}$ for $\alpha>2$. The coefficient of the logarithm depends on the kernel only through $k(0)$. 
\end{enumerate}

\subsection{Structure of the paper}
The rest of the paper is organised as follows. In Section \ref{sec:SILTVGB}, we describe the Volterra Gaussian bridges and prove the existence of their self-intersection local times in $L^p(\Omega, \cF, \prob)$. In Section \ref{sec:RR}, we construct the Rosen renormalisation of self-intersection local times for planar strongly locally nondeterministic Volterra Gaussian bridges. In Section \ref{sec:ACM}, we discuss the asymptotics of conditional moments of self-intersection local times of Volterra Gaussian processes satisfying the $(2,\zeta)$-local nondeterminism condition for large end-to-end distances. In Section \ref{sec:asymptotics2}, we describe the asymptotics of conditional moments of self-intersection local times of strongly locally nondeterministic Volterra Gaussian processes for large end-to-end distances.
\section{Volterra Gaussian bridges and the existence of their self-intersection local times} 
 \label{sec:SILTVGB} 
Let $w_1,\ldots, w_d$ be independent one-dimensional Wiener processes generating white noises $\xi_1,\ldots,\xi_d$ in $L^2([0,1])$.
Consider the $\mbR^d$-valued Volterra Gaussian process 
$$
u(t)=(u_1(t),\ldots, u_d(t)),\ t\in [0,1],
$$ 
where the coordinate $u_i$ is an $\mbR$-valued Volterra Gaussian process generated by the kernel $k$ and the Wiener process $w_i$,  for  $i=1,\ldots,d$. 
\begin{assumption}
    \label{assumption:kernel}
    Throughout the paper we assume that 
there exist constants $L>0$ and $\vk>0$ such that for all $s,t\in[0,1]$,
  \begin{align}
\label{eq:Holder} 
\Eof{(u_1(t)-u_1(s))^2} = 
\int_0^1\big(k(t,r)1_{[0,t]}(r)-k(s,r)1_{[0,s]}(r)\big)^2\differential{r}\leqslant L|t-s|^{\vk}.
 \end{align}
\end{assumption}

Since the random variable $u_1(t)-u_1(s)$ is Gaussian, the inequality in \eqref{eq:Holder} implies $\Eof{|u_1(t)-u_1(s)|^{2n}}\leqslant c_nL^n|t-s|^{n\vk}$ for every $n\in\mbN$. Therefore, by Kolmogorov's continuity criterion (see, e.g., \cite[Theorem I.2.1]{RevuzYor1999CMBMbook}), the stochastic process $u$ has a continuous modification. Henceforth, we work with this continuous modification. Condition \eqref{eq:Holder} holds with $\vk=1$ whenever $k$ is bounded and $|k(t,r)-k(s,r)|\leqslant L|t-s|$ for $0\leqslant r\leqslant s\leqslant t\leqslant 1$ (in particular, for the Wiener process), and with $\vk=2H$ for the fractional Brownian motion with Hurst index $H\in(0,1)$, since in that case the left-hand side of \eqref{eq:Holder} equals $|t-s|^{2H}$. Note that it is important to allow kernels that are unbounded near the diagonal, since a bounded kernel cannot satisfy the $(2, \zeta)$-local nondeterminism condition in \eqref{eq:LND} with $\zeta<1$.

For an $\mbR^d$-valued $(2,\zeta)$-locally nondeterministic Volterra Gaussian process with $\zeta\in (0,\frac{2}{d})$,  there exists a random variable $T^{u}_m$ such that 
$T^{u}_m=\lplim{\ve\to0}T^{u}_{\ve,m}$ for any $p\ge 2$. See \cite[Theorem 2.2]{Izyumtseva2026StochascticFlowInteraction} (with the weight function $\rho\equiv1$) and  Remark~\ref{rem:unconditional} below. 
In this paper, we are interested in the asymptotics of $\Eof{(T^u_m)^p\mid u(1)=a}$ for  $p\in\mbN$ as $\norm{a}\to\infty.$ 

Denote by $P$ the orthogonal projection in $L^2([0,1])$ onto the linear span of the element (function) $k(1,\cdot)1_{[0,1]}$, and put
$$
\vf(t)=\frac{(k(1,\cdot),k(t,\cdot)1_{[0,t]})}{\norm{k(1,\cdot)}^2}=\frac{\int^t_0k(1,r)k(t,r)\differential{r}}{\int^1_0k^2(1,r)\differential{r}},\ t\in[0,1].
$$
Consider the Volterra Gaussian bridge $\{\eta^a(t): \ t\in[0,1]\}$ in $\mbR^d$ from $0$ at time $0$ to $a=(a_1,\ldots,a_d)$ at time $1$, \emph{i.e.}, the Volterra Gaussian process $u$ conditioned to be $a$ at time $1$. Then, the coordinate process $\eta^a_i$ has the following representation: for $t \in [0, 1]$, 
\begin{align}
&{}\eta^a_i(t)=((I-P)k(t,\cdot)1_{[0,t]},\xi_i)+\vf(t)a_i
=u_i(t)-\vf(t)u_i(1)+\vf(t)a_i.
\label{eq:VGB}
\end{align}
For simplicity, let us denote by $\eta(t)\coloneq \eta^0(t),\ t\in[0,1]$, the Gaussian process in $\mbR^d$ with coordinates
\begin{align}
\label{eq:VGBB}
\eta_i(t)=((I-P)k(t,\cdot)1_{[0,t]},\xi_i),\ t\in[0,1],\ i=1,\ldots,d.
\end{align}

\begin{remark}
  \label{rem:regular_conditional}
  By the representation in \eqref{eq:VGB}, $u=\eta+\vf\,u(1)$. Moreover, the process $\eta$ is independent of $u(1)$, because all these random variables are jointly Gaussian and
$$
\Cov(\eta_i(t),u_i(1))=((I-P)k(t,\cdot)1_{[0,t]},k(1,\cdot))=0.
$$
Hence, the law of $\eta^a=\eta+\vf\,a$ is a regular conditional distribution of $u$ given $u(1)=a$ (see, \emph{e.g.}, \cite[Chapter 6]{Kallenberg2002}). 
We show in Lemma \ref{lem:conditional} below that $\Eof{(T^u_m)^p\mid u(1)=a}=\Eof{(T^{\eta^a}_m)^p}$ for every $p\in\mbN$. 
\end{remark}

Our next goal is to prove the existence of self-intersection local times for the Volterra Gaussian bridge $\eta^a$.
\subsection{Existence in $L^p(\Omega, \cF, \prob)$} 
The aim of this subsection is to prove the existence in $L^p(\Omega, \cF, \prob)$ of the random variable
$$
T^{\eta^a}_m=\int_{\Delta_m}\prod^{m-1}_{i=1}\delta_0(\eta^a(t_{i+1})-\eta^a(t_{i}))\differential{t_1}\cdots\differential{t_m}
$$
for a $(2,\zeta)$-locally nondeterministic Volterra Gaussian process $u.$ We do that by proving that the family of approximations $\{T^{\eta^a}_{\ve,m}: \ve >0\}$ given by 
 $$
T^{\eta^a}_{\ve,m}=\int_{\Delta_m}\prod^{m-1}_{i=1}f_{\ve}(\eta^a(t_{i+1})-\eta^a(t_{i}))\differential{t_1}\cdots\differential{t_m}
$$
converges in $L^p(\Omega, \cF, \prob)$ as $\ve\to0$. Such convergence problems very often involve integrability of certain Gram determinants.  Gram determinants of Gaussian random variables will be computed in $L^2(\Omega,\cF,\prob)$. Therefore, the notation $G(Y_1,\ldots,Y_n)$ will denote the determinant of the covariance matrix of $(Y_1,\ldots,Y_n)$. We start with a lower bound for Gram determinants that will be useful for the rest of the paper.

\begin{lemma}
\label{lem:Gram}
Assume $u$ is $(2,\zeta)$-locally nondeterministic, and let $c_\zeta>0$ be such that $\int_s^tk^2(t,r)\differential{r}\geqslant c_\zeta(t-s)^\zeta$ for all $0\leqslant s<t\leqslant 1$. Let $0\leqslant s_1\leqslant s_2\leqslant\ldots\leqslant s_N\leqslant 1$, put $s_0=0$, and let $V=(v_{ij})$ be an $n\times N$ matrix with integer entries and rank $n$. Define $Y_i=\sum_{j=1}^Nv_{ij}u_1(s_j)$, $i=1,\ldots,n$. Then, 
$$
G(Y_1,\ldots,Y_n)\geqslant c_\zeta^{\,n}\prod_{j=1}^N(s_j-s_{j-1})^\zeta.
$$
\end{lemma}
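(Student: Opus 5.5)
The plan is to reduce the Gram determinant of the integer combinations $Y_i$ to a product of conditional variances of the values $u_1(s_j)$. Each such conditional variance is then bounded below by the $(2,\zeta)$-local nondeterminism condition \eqref{eq:LND}. The combinatorial link between the two is a Cauchy--Binet expansion, together with the fact that a nonzero integer determinant is at least $1$ in absolute value.

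\textbf{Step 1 (conditional-variance factorisation).} Let $U=(u_1(s_1),\ldots,u_1(s_N))^{T}$ and let $C$ be its covariance matrix, so that $G(Y_1,\ldots,Y_n)=\det(VCV^{T})$. Apply Gram--Schmidt to $u_1(s_1),\ldots,u_1(s_N)$ in this order in $L^2(\Omega,\cF,\prob)$. This gives a factorisation $C=LDL^{T}$, where $L$ is unit lower triangular and $D=\mathrm{diag}(D_1,\ldots,D_N)$ with
$$
D_j=\Var\big(u_1(s_j)\mid u_1(s_1),\ldots,u_1(s_{j-1})\big).
$$
If $C$ is singular (for instance when some $s_j$ coincide), the same factorisation holds with some $D_j=0$; the right-hand side of the claim then also vanishes and nothing is to prove. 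Since $\sigma(u_1(s_1),\ldots,u_1(s_{j-1}))\subseteq\cF^w_{s_{j-1}}$, conditioning on the larger $\sigma$-field can only decrease the variance. Using the computation following Definition~\ref{def:LND}, this gives
$$
D_j\geqslant\Var(u_1(s_j)\mid\cF^w_{s_{j-1}})=\int_{s_{j-1}}^{s_j}k^2(s_j,r)\differential{r}\geqslant c_\zeta(s_j-s_{j-1})^\zeta .
$$
For $j=1$ the middle step uses $s_0=0$ and the trivial $\sigma$-field.

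\textbf{Step 2 (Cauchy--Binet).} Writing $W=VL$, we have $\det(VCV^{T})=\det(WDW^{T})$. By Cauchy--Binet,
$$
\det(WDW^{T})=\sum_{|J|=n}\det(W_J)^2\prod_{j\in J}D_j,
$$
where $W_J$ denotes the $n\times n$ submatrix of $W$ with columns in $J$. All terms are nonnegative, so it suffices to exhibit a single $J$ with $|\det W_J|\geqslant1$. For that $J$ we then get
$$
G\geqslant\prod_{j\in J}D_j\geqslant c_\zeta^{\,n}\prod_{j\in J}(s_j-s_{j-1})^\zeta\geqslant c_\zeta^{\,n}\prod_{j=1}^N(s_j-s_{j-1})^\zeta .
$$
The last inequality holds because every factor $(s_j-s_{j-1})^\zeta$ is at most $1$, so adding the factors with $j\notin J$ can only decrease the product.

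\textbf{Step 3 (choice of $J$; the main point).} The difficulty is that $L$ is real, not integer, so $\det W_J$ need not be an integer. I would choose $J$ greedily from the right: $j\in J$ if and only if the column $v_j$ of $V$ is not in the span of $v_{j+1},\ldots,v_N$. Since $V$ has rank $n$, exactly $n$ indices are chosen. Moreover, for $J=\{j_1<\cdots<j_n\}$ one has
$$
\mathrm{span}\{v_i:\ i>j_k\}=\mathrm{span}\{v_{j_l}:\ l>k\}.
$$
Because $L$ is unit lower triangular, the column $j$ of $W$ equals $v_j+\sum_{i>j}L_{ij}v_i$. Hence the $j_k$-th column of $W$ is $v_{j_k}$ plus a linear combination of $v_{j_{k+1}},\ldots,v_{j_n}$. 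Performing column operations from the last column backwards then shows $\det W_J=\det V_J$. This is a nonzero integer, since the columns $v_{j_1},\ldots,v_{j_n}$ are linearly independent, so $|\det W_J|\geqslant1$. This completes the argument.
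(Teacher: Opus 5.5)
Your proof is correct, but it takes a different route from the paper's.

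The paper applies a unimodular integer row transformation $U$ (Hermite normal form with the columns reversed) to put $V$ in echelon form with pivots $q_1<\cdots<q_n$. It uses $\det U=\pm1$ to keep the Gram determinant unchanged. It then bounds each successive conditional variance $\Var(Y'_i\mid Y'_1,\ldots,Y'_{i-1})$ through the filtration $\cF^{w_1}_{s_{q_i-1}}$, so integrality enters as $|v'_{iq_i}|\geqslant1$.

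You instead factor the covariance of the point values as $C=LDL^{T}$, expand $\det(WDW^{T})$ by Cauchy--Binet, and keep one term. Integrality then enters only as $|\det V_J|\geqslant1$.

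The two arguments are closer than they look. Your greedy set $J$ coincides with the paper's pivot set $\{q_1,\ldots,q_n\}$. Since $V'_J$ is lower triangular with diagonal entries $v'_{iq_i}$, the paper's computation in fact gives the same intermediate bound $G\geqslant\det(V_J)^2\prod_{j\in J}D_j$ as yours.

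What your version gains is that it isolates the arithmetic input. No Hermite normal form or unimodularity is needed, only real linear algebra and the fact that a nonsingular integer matrix has determinant of modulus at least $1$. Indeed, the paper's argument would also work with any real echelon reduction, after dividing by $(\det U)^2$. The paper's version is more directly probabilistic, since it bounds the conditional variances of the given combinations one at a time.

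Your separate treatment of singular $C$ is harmless but unnecessary, since Steps~2 and~3 only use $D_j\geqslant0$.
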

\begin{proof}[Proof of Lemma~\ref{lem:Gram}]
Reversing the order of the columns of $V$ and using the Hermite normal form of integer matrices (see, \emph{e.g.}, \cite[Section 4.1]{Schrijver1986}), we find an integer $n\times n$ matrix $U$ with $\det U=\pm1$ such that, after a suitable permutation of rows, the matrix $V'=UV=(v'_{ij})$ has the following property: there are indices $q_1<q_2<\ldots<q_n$ such that $v'_{iq_i}\neq0$ and $v'_{ij}=0$ for $j>q_i$, $i=1,\ldots,n$. Put $Y'_i=\sum_{j=1}^Nv'_{ij}u_1(s_j)$. Since $(Y'_1,\ldots,Y'_n)^T=U(Y_1,\ldots,Y_n)^T$, we have $G(Y'_1,\ldots,Y'_n)=(\det U)^2G(Y_1,\ldots,Y_n)=G(Y_1,\ldots,Y_n)$.

Fix $i$. The random variables $Y'_1,\ldots,Y'_{i-1}$ and $Y'_i-v'_{iq_i}u_1(s_{q_i})$ are linear combinations of $u_1(s_1),\ldots,u_1(s_{q_i-1})$, which are measurable with respect to $\cF^{w_1}_{s_{q_i-1}}$ (for $q_i=1$,  this $\sigma$-field is trivial). Therefore, 
\begin{align*}
\Var(Y'_i\mid Y'_1,\ldots,Y'_{i-1})&\geqslant\Var(Y'_i\mid\cF^{w_1}_{s_{q_i-1}})=(v'_{iq_i})^2\int_{s_{q_i-1}}^{s_{q_i}}k^2(s_{q_i},r)\differential{r}
\geqslant c_\zeta(s_{q_i}-s_{q_i-1})^\zeta,
\end{align*}
where we used the fact that $v'_{iq_i}$ is a nonzero integer. Consequently, since the Gram determinant of a Gaussian vector is the product of the successive conditional variances (this follows from the Schur complement formula for determinants, see, \emph{e.g.}, \cite[Section 0.8.5]{HornJohnson2013}),
\begin{align*}
G(Y_1,\ldots,Y_n)&=\prod_{i=1}^n\Var(Y'_i\mid Y'_1,\ldots,Y'_{i-1})
\geqslant c_\zeta^{\,n}\prod_{i=1}^n(s_{q_i}-s_{q_i-1})^\zeta\geqslant c_\zeta^{\,n}\prod_{j=1}^N(s_j-s_{j-1})^\zeta,
\end{align*}
where the last inequality holds because $0\leqslant s_j-s_{j-1}\leqslant 1$, for all $j$.
\end{proof}

 \begin{theorem}
  \label{thm:SILTVGB} 
  Suppose that the Volterra Gaussian process $\{u(t):\ t\in[0,1]\}$ in $\mathbb{R}^d$ is $(2,\zeta)$-locally nondeterministic for some $\zeta\in (0,\frac{2}{d})$. Then, for all $a\in\mbR^d$ and all $p\geqslant 1$, there exists a random variable $T^{\eta^a}_m$ such that the family $\{T^{\eta^a}_{\ve,m}: \ve >0\}$ converges to $T^{\eta^a}_m$ as $\ve \to 0$ in $L^p(\Omega, \cF, \prob)$, \emph{i.e.},  
\begin{align*}
T^{\eta^a}_m = \lplim{\ve\to0}T^{\eta^a}_{\ve,m}.
\end{align*}
\end{theorem}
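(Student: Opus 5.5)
We will show that $\{T^{\eta^a}_{\ve,m}:\ve>0\}$ is Cauchy in $L^{2n}(\Omega,\cF,\prob)$ for every $n\in\mbN$. This gives convergence in every $L^p$, $p\geqslant1$. Since all moments involved are finite, it suffices to prove that the mixed moments
$$
\E\prod_{l=1}^{2n}T^{\eta^a}_{\ve_l,m}
$$
converge to a common finite limit as $\ve_1,\ldots,\ve_{2n}\to0$ independently. The Cauchy property then follows by expanding $\E(T_{\ve}-T_{\ve'})^{2n}$ binomially.

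\textbf{Fourier representation.} Write $f_\ve(y)=(2\pi)^{-d}\int_{\mbR^d}e^{i(\lambda,y)-\ve\|\lambda\|^2/2}\differential{\lambda}$. Since the coordinates of $\eta^a$ are independent and each has the form $\eta_1+\vf a_1$, we have, for each $l$ and $i$, $\eta^a(t^l_{i+1})-\eta^a(t^l_i)=\eta(t^l_{i+1})-\eta(t^l_i)+(\vf(t^l_{i+1})-\vf(t^l_i))a$. The deterministic drift contributes a factor of modulus one. Taking expectations and using Fubini, we obtain
$$
\E\prod_{l}T_{\ve_l,m}^{\eta^a}=\int_{\Delta_m^{2n}}(2\pi)^{-d(m-1)2n}\int e^{i(\lambda,\text{drift})}\prod_{k=1}^d e^{-\frac12\lambda_k^{T}(C+D_\ve)\lambda_k}\differential{\lambda}\differential{\vec t},
$$
where $C=C(\vec t)$ is the covariance matrix of the $N=2n(m-1)$ one-dimensional increments $\eta_1(t^l_{i+1})-\eta_1(t^l_i)$, and $D_\ve=\mathrm{diag}(\ve_l)$. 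The inner integral is bounded by $\prod_k(2\pi)^{N/2}\det(C+D_\ve)^{-1/2}\leqslant c\,G^{-d/2}$, where $G=\det C$. As $\ve\to0$ it converges pointwise, wherever $G>0$, to the Gaussian density of the increments at the point determined by the drift. By dominated convergence, it therefore suffices to show
$$
\int_{\Delta_m^{2n}}G(\vec t)^{-d/2}\differential{\vec t}<\infty.
$$

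\textbf{Reducing the bridge Gram determinant to that of $u$.} Because $\eta_1=u_1-\vf u_1(1)$ with $\eta_1$ independent of $u_1(1)$, we have $G(Y^\eta_1,\ldots,Y^\eta_N)\,\Var u_1(1)=G(Y^u_1,\ldots,Y^u_N,u_1(1))$. Here $Y^u$ denotes the corresponding increments of $u_1$; their differences from $Y^\eta$ are multiples of $u_1(1)$, so subtracting them does not change the Gram determinant. Now order all $2n m$ time points together with $1$ as $0\leqslant s_1\leqslant\ldots\leqslant s_{M}\leqslant 1$. The vectors $Y^u_1,\ldots,Y^u_N,u_1(1)$ are integer combinations of the $u_1(s_j)$. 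We choose a maximal linearly independent subfamily that contains $u_1(1)$ and a spanning set of differences; for generic $\vec t$ (distinct times) the full family already has rank $N+1$, since each increment $t^l_{i+1}-t^l_i$ is a distinct chord. Lemma~\ref{lem:Gram} then gives
$$
G\geqslant c\prod_j(s_j-s_{j-1})^\zeta
$$
over the gaps of the merged configuration. This is too weak, however, because it includes gaps not "seen" by the increments. The refinement is to apply Lemma~\ref{lem:Gram} only to the points $\{t^l_i\}$, taking $N+1$ independent integer combinations. Choosing the row-reduced form, each pivot $q_i$ can be arranged to be a point that is a right endpoint of some increment. The gaps that appear are then those between consecutive merged points, restricted to one gap per increment, which yields a product of $N$ gap factors.

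\textbf{Main obstacle: integrability.} The key step is to show that
$$
\int_{\Delta_m^{2n}}\prod_{\text{selected gaps}}(s_j-s_{j-1})^{-\zeta d/2}\differential{\vec t}<\infty.
$$
We would decompose $\Delta_m^{2n}$ into finitely many pieces according to the relative order of all $2nm$ points. On each piece, we change variables to the merged gaps $g_1,\ldots,g_{2nm}$, which range over a subset of a simplex with unit Jacobian. At most one factor $g_j^{-\zeta d/2}$ appears per gap, and $\zeta d/2<1$, so each factor is integrable and the integral is bounded by $\prod_j\int_0^1g^{-\zeta d/2}\differential{g}<\infty$. The delicate point is ensuring that the pivot selection in Lemma~\ref{lem:Gram} never assigns two factors to the same gap. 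The lemma's structure already guarantees this: the pivots $q_1<\ldots<q_n$ are distinct, so each gap appears at most once. We also need the pivots to cover only gaps, that is, we need the full count $N$ rather than fewer. Should the count fall short, we would argue that fewer factors only improve integrability, because each gap is at most $1$. With this settled, dominated convergence gives the common limit, and hence the $L^p$ convergence.
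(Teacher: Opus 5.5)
Your proposal is correct and follows essentially the paper's argument: the binomial expansion of mixed moments, dominated convergence reducing everything to $\int G_t^{-d/2}<\infty$, the identity $G_t\,\Var(u_1(1))=G(\Delta u_1(t_i),\ i\in S;\ u_1(1))$, and Lemma~\ref{lem:Gram} applied to the ordered time points together with the point $1$; the Fourier representation and the independent $\ve_l$ are only cosmetic differences. The one misjudgment is calling the full bound $G_t\geqslant c\prod_j(s_j-s_{j-1})^\zeta$ too weak: every exponent $-\zeta d/2$ exceeds $-1$, so the Dirichlet integral of the product over all gaps on the ordered simplex is finite, and this is exactly how the paper concludes. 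Your pivot refinement (the intermediate step inside the proof of Lemma~\ref{lem:Gram}) is therefore unnecessary, and as written it needs one fix: the pivot gap $1-\max_i t_i$ coming from $u_1(1)$ is not one of your free coordinates $g_1,\ldots,g_{2nm}$, so you must take an unselected gap as the dependent variable before bounding by a product of one-dimensional integrals.
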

 \begin{proof}[Proof of Theorem~\ref{thm:SILTVGB}]
Since convergence in $L^p(\Omega,\cF,\prob)$ implies convergence in $L^{p'}(\Omega,\cF,\prob)$ for $p'\leqslant p$, we may assume that $p$ is an even integer. Let us check that the family $\{T^{\eta^a}_{\ve,m}:\ \ve>0\}$ is Cauchy in $L^p(\Omega,\cF,\prob)$. For $\ve_1,\ \ve_2>0$, note that 
\begin{align*}
&{}\Eof{(T^{\eta^a}_{\ve_1,m}-T^{\eta^a}_{\ve_2,m})^p}
=\sum^p_{l=0}\binom{p}{l}(-1)^{p-l}\Eof{(T^{\eta^a}_{\ve_1,m})^l(T^{\eta^a}_{\ve_2,m})^{p-l}}.
\end{align*}
Since $\sum^p_{l=0}\binom{p}{l}(-1)^{p-l}=0$, it suffices to check that $\Eof{(T^{\eta^a}_{\ve_1,m})^l(T^{\eta^a}_{\ve_2,m})^{p-l}}$ has a finite limit (as $\ve_1,\ \ve_2\to0$) that does not depend on $l$. We give the details for $\ve_1=\ve_2=\ve$; the general case is identical, since only the variances of the Gaussian kernels change.

Let $S=\{1,\ldots,pm-1\}\setminus\{m,2m,\ldots,(p-1)m\}$ be the set of indices of the increments within the $p$ blocks of $t=(t_1,\ldots,t_{pm})\in\Delta^p_m$. Then, 
\begin{align*}
\Eof{(T^{\eta^a}_{\ve,m})^p}=\int_{\Delta^p_m}\Eof{\prod_{i\in S}f_{\ve}(\Delta\eta(t_{i})+q(t_i,t_{i+1})a)}\differential{t_1}\ldots\differential{t_{pm}},
\end{align*}
where
$
\Delta \eta(t_i)\coloneq\eta(t_{i+1})-\eta(t_{i})
$
and
$$
q(s,t)\coloneq\vf(t)-\vf(s)=\frac{\int^{t}_0k(1,r)k(t,r)\differential{r}-\int^{s}_0k(1,r)k(s,r)\differential{r}}{\norm{k(1,\cdot)}^2}.
$$
Denote $\Delta g(t_i)=k(t_{i+1},\cdot)1_{[0,t_{i+1}]}-k(t_{i},\cdot)1_{[0,t_{i}]}$. Let $A_t$ be the Gram matrix of $\{(I-P)\Delta g(t_i)$, $i\in S\}$, and let $$G_t=\det A_t=G((I-P)\Delta g(t_i),\ i\in S).$$
The $d$ coordinates of the Gaussian vector $(\Delta\eta(t_i))_{i\in S}$ are independent, and each of them has covariance matrix $A_t$. We show below that $G_t>0$ for almost every $t$. Note that 
 \begin{align*}
&{} 
\Eof{\prod_{i\in S}f_{\ve}(\Delta\eta(t_{i})+q(t_i,t_{i+1})a)}\\
&{}=\int_{(\mbR^{d})^S}\prod_{i\in S}f_{\ve}(y_i+q(t_i,t_{i+1})a)\frac{1}{(2\pi)^{\frac{p(m-1)d}{2}}G_t^{\frac{d}{2}}}\myExp{-\frac{1}{2}\sum^d_{j=1}(A^{-1}_{t}y^{(j)},y^{(j)})}\differential{y},
\end{align*}
where $y^{(j)}=(y^{(j)}_i)_{i\in S}$ is the vector of the $j$-th coordinates of $y_i$, $i\in S$. As $\ve\to0$, this converges to the value of the density of $(\Delta\eta(t_i))_{i\in S}$ at the point $(-q(t_i,t_{i+1})a)_{i\in S}$, namely
$$
\frac{1}{(2\pi)^{\frac{p(m-1)d}{2}}G_t^{\frac{d}{2}}}\myExp{-\frac{\norm{a}^2}{2}(A^{-1}_{t}q_t,q_t)},\quad q_t=(q(t_i,t_{i+1}))_{i\in S},
$$
and, for all $\ve>0$, it is bounded by $(2\pi)^{-\frac{p(m-1)d}{2}}G_t^{-\frac{d}{2}}$. Therefore, by the dominated convergence theorem \cite[Theorem 1.21]{Kallenberg2002}, it suffices to check that
\begin{align}
\label{eq:Gt_integrable}
\int_{\Delta^p_m}G_t^{-\frac{d}{2}}\differential{t_1}\cdots\differential{t_{pm}}<\infty.
\end{align}
Let $e=k(1,\cdot)1_{[0,1]}/\norm{k(1,\cdot)}$. Since $P$ is the orthogonal projection onto the span of the unit vector $e$, and $\norm{k(1,\cdot)}^2=\Var(u_1(1))$,
\begin{align*}
G_t=G\left(\Delta g(t_i),\ i\in S;\ e\right)=\frac{G\left(\Delta u_1(t_i),\ i\in S;\ u_1(1)\right)}{\Var(u_1(1))},
\end{align*}
where $\Delta u_1(t_i)=u_1(t_{i+1})-u_1(t_i)$. Let $s_1\leqslant s_2\leqslant\ldots\leqslant s_{pm}$ be the points $t_1,\ldots,t_{pm}$ arranged in increasing order (the order statistics, in statistical terminology), and put $s_0=0$, $s_{pm+1}=1$. The random variables $\Delta u_1(t_i)$, $i\in S$, and $u_1(1)$ are integer linear combinations of $u_1(s_1),\ldots,u_1(s_{pm+1})$. For almost every $t$, the points $s_1<\ldots<s_{pm}<1$ are distinct, and then these $p(m-1)+1$ combinations are linearly independent, because the increments within each block are differences of values at distinct points, different blocks involve disjoint sets of points, and $u_1(1)$ involves the point $1$. 
Therefore, by  Lemma~\ref{lem:Gram},
\begin{align}
\label{eq:Gt_lower}
G_t\geqslant c\prod_{j=1}^{pm+1}(s_j-s_{j-1})^\zeta
\end{align}
for almost every $t$, where $c>0$ does not depend on $t$; in particular, $G_t>0$ for almost every $t$. The right-hand side of \eqref{eq:Gt_lower} is a symmetric function of $t_1,\ldots,t_{pm}$ and $\Delta^p_m\subset[0,1]^{pm}$. Hence, the desired integrability follows since  
\begin{align*}
\int_{\Delta^p_m}G_t^{-\frac{d}{2}}\differential{t_1}\differential{t_2}\cdots\differential{t_{pm}}&\leqslant c\,(pm)!\int_{0\leqslant s_1\leqslant\ldots\leqslant s_{pm}\leqslant1}\ \prod_{j=1}^{pm+1}(s_j-s_{j-1})^{-\frac{\zeta d}{2}}\differential{s_1}\cdots\differential{s_{pm}}
\\
&
=c\,(pm)!\,\frac{\Gamma(1-\frac{\zeta d}{2})^{pm+1}}{\Gamma((pm+1)(1-\frac{\zeta d}{2}))},
\end{align*}
by the Dirichlet integral formula (see, e.g., \cite[Section 12.5]{WhittakerWatson1996}), which is finite because $\frac{\zeta d}{2}<1$. This proves  \eqref{eq:Gt_integrable}.
 \end{proof}

\begin{remark}
\label{rem:unconditional}
The proof of Theorem \ref{thm:SILTVGB} applies verbatim to the process $u$ itself, \emph{i.e.}, with $P$ replaced by the zero operator and $\eta^a$ by $u$. In this case,  $G_t=G(\Delta u_1(t_i),\ i\in S)$, Lemma \ref{lem:Gram} gives $G_t\geqslant c\prod_{j=1}^{pm}(s_j-s_{j-1})^\zeta$, and the resulting Dirichlet integral is finite. Hence, if $u$ is $(2,\zeta)$-locally nondeterministic with $\zeta\in(0,\frac{2}{d})$, then $T^u_m=\lplim{\ve\to0}T^u_{\ve,m}$ exists for every $p\geqslant1$. This agrees with \cite[Theorem 2.2]{Izyumtseva2026StochascticFlowInteraction} with $\rho\equiv1$. Note that the Lipschitz condition imposed on the kernel there is used only to guarantee the continuity of the sample paths and plays no role in the convergence argument.
\end{remark}

\begin{remark}
  \label{rem:geometric_interpretation} The integrability requirement in \eqref{eq:Gt_integrable} has a direct geometric interpretation. Denote by $\theta_t \in [0, \pi/2]$ the angle between $e = k(1,\cdot)1_{[0,1]}/\norm{k(1,\cdot)}$ and the linear span generated by $\{\Delta g(t_i) : \ i\in S\}$. Then,  
  \begin{align*}
G_t=G\left(\Delta g(t_i),\ i\in S;\ e\right)= \left(\vol{ \Delta g(t_i),\ i\in S} \sin(\theta_t)\right)^2, \quad t\in \Delta_m^p. 
\end{align*}
Here, the notation $\vol{h_1, h_2, \ldots, h_k}$ denotes the volume of the parallelotope spanned by the elements $h_1, h_2, \ldots, h_k$ in a Hilbert space $\mbH$. 
Therefore, \eqref{eq:Gt_integrable} is equivalent to the following geometric requirement: 
\begin{align*}
 \int_{\Delta^p_m} \left(\vol{ \Delta g(t_i),\ i\in S} \sin(\theta_t)\right)^{-{d}}\differential{t_1}\cdots\differential{t_{pm}}<\infty.\end{align*}
\end{remark}

\begin{lemma}
\label{lem:conditional}
Assume that $u$ is $(2,\zeta)$-locally nondeterministic for some $\zeta\in(0,\frac{2}{d})$, and let $p\in\mbN$. Then, the mapping $a \mapsto \Eof{(T^{\eta^a}_m)^p}$ is a version of the conditional expectation $\Eof{(T^u_m)^p\mid u(1)=a}$. 
\end{lemma}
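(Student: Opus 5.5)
We have to show that for every bounded Borel $h\colon\mbR^d\to\mbR$,
\[
\Eof{(T^u_m)^p h(u(1))}=\int_{\mbR^d}\Eof{(T^{\eta^a}_m)^p}h(a)\,\prob_{u(1)}(\differential{a}),
\]
together with measurability of $a\mapsto\Eof{(T^{\eta^a}_m)^p}$. The idea is to check this identity for the approximations $T_{\ve,m}$, where everything is an explicit functional of continuous paths, and then pass to the limit $\ve\to0$ on both sides.

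\emph{Approximation level.} By Remark~\ref{rem:regular_conditional}, $u=\eta+\vf\,u(1)$ with $\eta$ independent of $u(1)$, and $\eta^a=\eta+\vf a$. For fixed $\ve>0$, $T^{u}_{\ve,m}=F_\ve(\eta,u(1))$, where
\[
F_\ve(x,a)=\int_{\Delta_m}\prod_{i=1}^{m-1}f_\ve\bigl(x(t_{i+1})-x(t_i)+(\vf(t_{i+1})-\vf(t_i))a\bigr)\differential{t_1}\cdots\differential{t_m}.
\]
This is a nonnegative jointly measurable functional of the continuous path $x$ and of $a$; $\vf$ is continuous by \eqref{eq:Holder} and Cauchy--Schwarz. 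Independence and Fubini (the freezing lemma) then give
\[
\Eof{(T^u_{\ve,m})^p h(u(1))}=\int\Eof{(T^{\eta^a}_{\ve,m})^p}h(a)\,\prob_{u(1)}(\differential{a}).
\]

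\emph{Limit $\ve\to0$.} On the left, Remark~\ref{rem:unconditional} gives $T^u_{\ve,m}\to T^u_m$ in $L^p$, and $h$ is bounded, so the left side converges to $\Eof{(T^u_m)^ph(u(1))}$. On the right, Theorem~\ref{thm:SILTVGB} gives $\Eof{(T^{\eta^a}_{\ve,m})^p}\to\Eof{(T^{\eta^a}_m)^p}$ for every $a$. To interchange limit and integral, I would use the bound from the proof of Theorem~\ref{thm:SILTVGB}: the integrand there is bounded by $(2\pi)^{-p(m-1)d/2}G_t^{-d/2}$, which does not depend on $a$ or $\ve$. Combined with \eqref{eq:Gt_integrable}, this yields
\[
\sup_{\ve>0,\ a\in\mbR^d}\Eof{(T^{\eta^a}_{\ve,m})^p}<\infty .
\]
Since $\prob_{u(1)}$ is a probability measure, dominated convergence applies. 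Measurability of $a\mapsto\Eof{(T^{\eta^a}_m)^p}$ follows because it is a pointwise limit of the measurable maps $a\mapsto\Eof{(T^{\eta^a}_{\ve,m})^p}$; each of these is measurable (indeed continuous) by Fubini.

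\emph{Main obstacle.} The delicate point is making the limit and identification rigorous without circularity. The limit $T^{\eta^a}_m$ is defined only as an $L^p$ limit for each fixed $a$, so it is not a priori of the form $F(\eta,a)$ jointly. The argument above avoids this by never conditioning the limit object: it only uses $p$-th moments, which are deterministic functions of $a$, and obtains them as pointwise limits of the explicit approximating moments.

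One could worry that the Cauchy argument in Theorem~\ref{thm:SILTVGB} is only stated for even $p$. This is harmless here: for integer $p$, convergence in $L^{p'}$ with $p'\ge p$ even already implies convergence of the $p$-th moments.
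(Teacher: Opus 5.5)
Your proof is correct and follows the same route as the paper: you identify the conditional moments at the level of the approximations $T_{\ve,m}$ via the independence of $\eta$ and $u(1)$, then let $\ve\to0$, using Remark~\ref{rem:unconditional} on the $u$ side and Theorem~\ref{thm:SILTVGB} on the bridge side. The only difference is in how the limit is taken. The paper uses $L^1$-convergence of the conditional expectations plus an almost surely convergent subsequence, so it needs no $a$-uniform bound; you instead test against bounded $h$ and apply dominated convergence with the uniform bound coming from \eqref{eq:Gt_integrable}, and you also make explicit the measurability of $a\mapsto\Eof{(T^{\eta^a}_m)^p}$, which the paper leaves implicit.
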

\begin{proof}[Proof of Lemma~\ref{lem:conditional}]
For $\ve>0$, we have $T^u_{\ve,m}=\Phi_\ve(u)$ and $T^{\eta^a}_{\ve,m}=\Phi_\ve(\eta^a)$ for a continuous functional $\Phi_\ve$ on $C([0,1],\mbR^d)$. Since $u=\eta+\vf\,u(1)$ with $\eta$ independent of $u(1)$ (see Remark~\ref{rem:regular_conditional}),
$$
\Eof{(T^u_{\ve,m})^p\mid u(1)=a}=\Eof{\Phi_\ve(\eta+\vf\,a)^p}=\Eof{(T^{\eta^a}_{\ve,m})^p}, \quad\text{for every }a\in\mbR^d.
$$
As $\ve\to0$, $T^u_{\ve,m}\to T^u_m$ in $L^{\max(p,2)}(\Omega, \cF, \prob)$ by Remark \ref{rem:unconditional}, and hence, by H\"older's inequality, $(T^u_{\ve,m})^p\to(T^u_m)^p$ in $L^1(\Omega, \cF, \prob)$. Therefore, $\Eof{(T^u_{\ve,m})^p\mid u(1)}\to\Eof{(T^u_m)^p\mid u(1)}$ in $L^1(\Omega, \cF, \prob)$ and, along a sequence $\ve_n\to0$, almost surely. On the other hand, $\Eof{(T^{\eta^a}_{\ve,m})^p}\to\Eof{(T^{\eta^a}_m)^p}$ for every $a$ by Theorem \ref{thm:SILTVGB}. Thus, $\Eof{(T^u_m)^p\mid u(1)=a}=\Eof{(T^{\eta^a}_m)^p}$ for almost every $a$ with respect to the law of $u(1)$.
\end{proof}
In what follows, $\Eof{(T^u_m)^p\mid u(1)=a}$ will always denote this version. 

\begin{remark}
  Note that self-intersection local times of $\mbR^d$-valued Volterra Gaussian bridges can also be shown to exist almost surely. One possible way to prove this assertion is to use the local time approach of \cite{xiao2002local}. The standard modus operandi is to first consider the occupation measure of an appropriate Gaussian random field constructed by the increments of the Volterra Gaussian bridge, and then prove the existence of the local times as a Radon--Nikodym derivative of the occupation measure with respect to the Lebesgue measure. The second step can be achieved via standard harmonic analytic tools, \emph{e.g.}, by showing  integrability of the Fourier--Stieltjes transform of the occupation measure (see \cite[Theorem 21.9]{GemanHorowitz1980}).  In fact, it can also be shown that the local time thus obtained is jointly continuous in both the time as well as the spatial variables. We omit the proofs of these assertions for  two reasons. First, they can be obtained by using arguments similar to those in \cite{xiao2002local}. Second, our primary focus in this paper is the asymptotic behaviour of the moments of the self-intersection local times of the Volterra Gaussian bridges as the end-to-end distance becomes large. Because our main object is a Gaussian process, the $L^p$ techniques used in this paper are naturally well-suited for this pursuit. We refer the readers to \cite{GemanHorowitz1980} for a comprehensive account of local times of random fields, and the monograph \cite{Marcus2006MarkovProcesses} for an overview of techniques applied to other classes of stochastic processes such as Markov processes.
\end{remark}

\section{Rosen renormalisation}
\label{sec:RR}
In this section, we study self-intersection local times for planar ($d=2$) strongly locally nondeterministic Volterra Gaussian bridges. We prove that the increments of these bridges behave like the increments of the Wiener process and construct Rosen renormalised self-intersection local times \cite{rosen1986renormalized}. 

Let $u$ be a Volterra Gaussian process generated by the kernel $k$
and let $\eta^a$ be the corresponding Volterra Gaussian bridge defined in \eqref{eq:VGB}. In this section and in  Section~\ref{sec:asymptotics2}, we will assume the kernel depends only on the difference of its arguments:
\begin{align}
\label{eq:stationary_kernel}
k(t,s)=k(t-s)1_{[0,t]}(s),\ t,s\in[0,1],
\end{align}
where, with a slight abuse of notation, $k\in C^1([0,1])$, the space of continuously differentiable functions on $[0, 1]$, and $k(0)\neq0$. Such kernels satisfy Assumption~\ref{assumption:kernel} of Section \ref{sec:SILTVGB} with $\vk=1$. Moreover, since $k(0)\neq0$, there is $c>0$ with $$\int_0^hk^2(x)\differential{x}\geqslant ch$$ for $h\in(0,1]$ (see the proof of Lemma \ref{lem:increments} below), so that $u$ is $(2,1)$-locally nondeterministic. 

We write $g(t)=k(t-\cdot)1_{[0,t]}$ and $\Delta g(s,t)=g(t)-g(s)$ for $0\leqslant s<t\leqslant 1$. Note that $k(1,\cdot)=k(1-\cdot)$ on $[0,1]$. The following technical lemma provides a number of useful estimates and asymptotic properties. 

\begin{lemma}\label{lem:increments}Let the kernel $k$ be as in \eqref{eq:stationary_kernel}. Then, the following statements hold.
 \begin{enumerate}
 \item There exist $C>0$ and $\delta\in(0,1]$ such that $\norm{\Delta g(s,t)}^2\leqslant C(t-s)$ for all $0\leqslant s<t\leqslant 1$ and
 \begin{align*}
\frac{1}{2}k^2(0)(t-s)\leqslant\norm{(I-P)\Delta g(s,t)}^2\leqslant C(t-s), \quad\text{whenever}\ t-s\leqslant\delta.
 \end{align*}  
 Moreover, $\norm{(I-P)\Delta g(s,t)}^2\sim k^2(0)(t-s)$ as $t-s\to0$, uniformly in $s$.
 \item For every $h\in L^2([0,1])$,
 $
\left(\frac{\Delta g(s,t)}{\norm{(I-P)\Delta g(s,t)}},h\right)\to0\quad\text{as}\ t-s\to0,\ \text{uniformly in}\ s,
 $
 \emph{i.e.}, the normalised increments $\Delta g(s,t)/\norm{(I-P)\Delta g(s,t)}$ converge weakly to $0$ in $L^2([0,1])$ as $t-s\to0$.
 \item There exists $C>0$ such that for all $0\leqslant s_1<s_2\leqslant s_3<s_4\leqslant 1$, 
 \begin{gather*}
|(\Delta g(s_1,s_2),\Delta g(s_3,s_4))|\leqslant C(s_2-s_1)(s_4-s_3),\quad \\
 \text{ and }\quad 
|(\Delta g(s_1,s_2),k(1,\cdot))|\leqslant C(s_2-s_1).
 \end{gather*}
 In particular, the increments over disjoint intervals are asymptotically orthogonal:
 $$
 \frac{|(\Delta g(s_1,s_2),\Delta g(s_3,s_4))|}{\norm{\Delta g(s_1,s_2)}\,\norm{\Delta g(s_3,s_4)}}\leqslant C'\min(s_2-s_1,\,s_4-s_3)^{\frac{1}{2}}.
 $$
 \end{enumerate}
\end{lemma}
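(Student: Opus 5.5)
The plan is to work directly with the explicit decomposition
\[
\Delta g(s,t)=k(t-\cdot)1_{(s,t]}(\cdot)+\bigl(k(t-\cdot)-k(s-\cdot)\bigr)1_{[0,s]}(\cdot),
\]
and to write $K=\max_{[0,1]}|k|$ and $K'=\max_{[0,1]}|k'|$, both finite because $k\in C^1([0,1])$. Put $h=t-s$.

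\textbf{Part (1).} The first summand has squared norm $\int_0^h k^2(x)\,\differential{x}$. By continuity of $k$ at $0$, this equals $k^2(0)h(1+o(1))$ uniformly in $s$, and the same continuity argument gives $\int_0^h k^2\geqslant ch$. By the mean value theorem the second summand is bounded pointwise by $K'h$, so its squared norm is at most $K'^2h^2$. Hence $\norm{\Delta g(s,t)}^2\leqslant Ch$ and $\norm{\Delta g(s,t)}^2\sim k^2(0)h$ uniformly in $s$. For the projection, note that
\[
\norm{P\Delta g}^2=\frac{(\Delta g,k(1,\cdot))^2}{\norm{k(1,\cdot)}^2}.
\]
By the estimate in part (3), $|(\Delta g,k(1,\cdot))|\leqslant Ch$, so $\norm{P\Delta g}^2=O(h^2)$. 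Consequently $\norm{(I-P)\Delta g}^2=k^2(0)h(1+o(1))$ uniformly in $s$, and choosing $\delta$ small yields the two-sided bound with constant $\tfrac12k^2(0)$. Logically, I will prove the inner product estimates of part (3) first and then deduce parts (1) and (2).

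\textbf{Part (3).} For $r\in[0,1]$ and a bounded function $f$ on $[0,1]$, I will bound
\[
|(\Delta g(s_1,s_2),f)|\leqslant \int_{s_1}^{s_2}|k||f|+\int_0^{s_1}|k(s_2-r)-k(s_1-r)||f(r)|\,\differential{r}\leqslant \bigl(K+K'\bigr)\norm{f}_\infty\,(s_2-s_1).
\]
Taking $f=k(1-\cdot)$ gives the second inequality. For the first inequality, take $f=\Delta g(s_3,s_4)$, which is bounded by $2K$ but not small in sup norm. To gain the second factor $(s_4-s_3)$, I will split $\Delta g(s_3,s_4)$ as above. Its piece $(k(s_4-\cdot)-k(s_3-\cdot))1_{[0,s_3]}$ is bounded by $K'(s_4-s_3)$, which together with the displayed bound gives a product $O((s_2-s_1)(s_4-s_3))$. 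Its piece $k(s_4-\cdot)1_{(s_3,s_4]}$ is supported in $(s_3,s_4]$, which is disjoint from $[0,s_2]\supseteq\operatorname{supp}\Delta g(s_1,s_2)$ because $s_2\leqslant s_3$, so its inner product with $\Delta g(s_1,s_2)$ vanishes. Finally, dividing by the norms and using the lower bound from part (1), which holds for small increments, gives $C'\sqrt{h_1h_2}\leqslant C'\min(h_1,h_2)^{1/2}$ since $h_i\leqslant1$; for increments larger than $\delta$ the norms are bounded below by compactness and the bound follows by adjusting the constant. That compactness step is the mildly delicate point: one needs $\norm{\Delta g(s,t)}$ bounded away from $0$ on $\{t-s\geqslant\delta\}$, which follows from $\norm{\Delta g}^2\geqslant\int_s^t k^2(t-r)\,\differential{r}\geqslant c\,\delta$.

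\textbf{Part (2).} By part (1), the denominator is of order $h^{1/2}$, so it suffices to show $(\Delta g(s,t),f)=o(h^{1/2})$ uniformly in $s$. For bounded $f$ the estimate from part (3) gives $O(h)$. For general $h\in L^2$ (written $f$ here), I will approximate by a bounded $f_\ve$ with $\norm{f-f_\ve}<\ve$. Since $\norm{\Delta g}/\norm{(I-P)\Delta g}$ is uniformly bounded, the normalised increments form a bounded family, and an $\ve/3$ argument finishes the proof. The main obstacle is only bookkeeping of uniformity in $s$, which is handled by the sup-norm bounds above.
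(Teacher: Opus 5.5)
Your proposal is correct and follows essentially the same route as the paper. You prove (3) first by noting that on $[0,s_3]$ the increment $\Delta g(s_3,s_4)$ is $O(s_4-s_3)$ pointwise, while its remaining piece has support disjoint from that of $\Delta g(s_1,s_2)$. You then deduce (1) from $\norm{\Delta g}^2-\norm{P\Delta g}^2$ with $O((t-s)^2)$ error terms, and obtain the asymptotic-orthogonality bound from $\norm{\Delta g(s,t)}^2\geqslant\int_0^{t-s}k^2\geqslant c(t-s)$.

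The only deviation is in (2): you use a density argument with bounded approximants, whereas the paper uses the direct estimate $|(\Delta g(s,t),h)|\leqslant\norm{k}_\infty(t-s)^{1/2}(\int_s^th^2)^{1/2}+\norm{k'}_\infty(t-s)\norm{h}$ together with uniform absolute continuity of the integral; both work.
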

\begin{proof}[Proof of Lemma~\ref{lem:increments}] 
We will prove the last assertion first. For $r\leqslant s_3$, both indicators in $\Delta g(s_3,s_4)(r)=k(s_4-r)-k(s_3-r)$ are equal to $1$. Hence, $$|\Delta g(s_3,s_4)(r)|\leqslant\norm{k'}_\infty(s_4-s_3)$$ for $r\leqslant s_3$. Moreover, $\Delta g(s_1,s_2)$ vanishes outside $[0,s_2]\subset[0,s_3]$, and
\begin{align*}
\int_0^{s_2}|\Delta g(s_1,s_2)(r)|\differential{r}&\leqslant\int_{s_1}^{s_2}|k(s_2-r)|\differential{r}+\int_0^{s_1}|k(s_2-r)-k(s_1-r)|\differential{r}\\
&\leqslant(\norm{k}_\infty+\norm{k'}_\infty)(s_2-s_1).
\end{align*}
These two estimates give both inequalities in (3). 

Since the operator $I-P$ is an orthogonal projection,
\begin{align*}
\norm{(I-P)\Delta g(s,t)}^2&{}=\norm{\Delta g(s,t)}^2-\norm{P\Delta g(s,t)}^2\\
&{}=\int^{t-s}_{0}k^2(x)\differential{x}+\int^{s}_0(k(t-r)-k(s-r))^2\differential{r}-\frac{(\Delta g(s,t),k(1,\cdot))^2}{\norm{k(1,\cdot)}^2}.
\end{align*}
The first summand equals $k^2(0)(t-s)+o(t-s)$ as $t-s\to0$, uniformly in $s$, because $k$ is continuous at $0$. The second summand is at most $\norm{k'}_\infty^2(t-s)^2$. Finally,  the third one is at most $C^2(t-s)^2/\norm{k(1,\cdot)}^2$. This gives the asymptotic relation, the bound $\norm{\Delta g(s,t)}^2\leqslant(\norm{k}^2_\infty+\norm{k'}^2_\infty)(t-s)$ and, for $\delta$ small enough, the two-sided bound for $t-s\leqslant\delta$. Choosing $\delta$ smaller if necessary, we also have $\int_0^{h}k^2(x)\differential{x}\geqslant\frac34k^2(0)\min(h,\delta)\geqslant\frac34k^2(0)\,\delta h$ for $h\in(0,1]$, so that $\norm{\Delta g(s,t)}^2\geqslant\frac34k^2(0)\,\delta\,(t-s)$ for all $s<t$. Together with the first inequality in (3), this proves the last statement of (3).

Now, for $h\in L^2([0,1])$,
$$
|(\Delta g(s,t),h)|\leqslant\norm{k}_\infty(t-s)^{\frac{1}{2}}\left(\int_s^th^2(r)\differential{r}\right)^{\frac{1}{2}}+\norm{k'}_\infty(t-s)\norm{h}.
$$
Dividing by $\norm{(I-P)\Delta g(s,t)}\geqslant|k(0)|(t-s)^{\frac{1}{2}}/\sqrt{2}$ (for $t-s\leqslant\delta$), we obtain a bound that tends to $0$ as $t-s\to0$, uniformly in $s$, by the absolute continuity of the Lebesgue integral.
\end{proof}

\begin{lemma}
\label{lem:slnd}
 Let the kernel $k$ be as in \eqref{eq:stationary_kernel}, and let $$\tilde{g}(t)=(I-P)k(t-\cdot)1_{[0,t]},\quad t\in[0,1].$$ Then, for every $\delta_0\in(0,1)$, the function $\tilde{g}$ is strongly locally nondeterministic on the configurations $(t_1,\ldots,t_m)\in\Delta_m$ with $t_m-t_1\leqslant1-\delta_0$.
 \end{lemma}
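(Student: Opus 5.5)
The plan is to reduce the strong local nondeterminism of $\tilde g$ to two ingredients: the asymptotic orthogonality of small increments, given by Lemma~\ref{lem:increments}, and a uniform nondegeneracy of the Gram matrix of the ``large'' increments after projection. Fix $m$, a subset $M\subseteq\{1,\ldots,m-1\}$, and the configuration of the increments $\Delta t_i$, $i\notin M$. Write $e_i=(I-P)\Delta g(t_i)=\Delta\tilde g(t_i)$ and normalise $\hat e_i=e_i/\norm{e_i}$ for $i\in M$. By multilinearity,
\[
G(e_1,\ldots,e_{m-1})=\prod_{i\in M}\norm{e_i}^2\,G(\hat e_i,\ i\in M;\ e_j,\ j\notin M).
\]
It therefore suffices to show that $G(\hat e_i, i\in M; e_j, j\notin M)\to G(e_j, j\notin M)$ as $\max_{i\in M}\Delta t_i\to0$, and that the limit is nonzero. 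Nonvanishing follows from Lemma~\ref{lem:Gram}, together with the Gram identity with $e$ used in the proof of Theorem~\ref{thm:SILTVGB}. Since $u$ is $(2,1)$-locally nondeterministic and the increments $\Delta t_j$, $j\notin M$, are fixed and positive, this gives $G(e_j,j\notin M)>0$.

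\textbf{Step 1: entries of the Gram matrix.} I will show that $(\hat e_i,\hat e_{i'})\to\delta_{ii'}$ for $i,i'\in M$ and that $(\hat e_i,e_j)\to0$ for $i\in M$, $j\notin M$. The cross terms among small increments are handled by $(\hat e_i,\hat e_{i'})=(\Delta g(t_i),(I-P)\Delta g(t_{i'}))/(\norm{e_i}\norm{e_{i'}})$. Expand $(I-P)x=x-(x,k(1,\cdot))k(1,\cdot)/\norm{k(1,\cdot)}^2$. The $\Delta g$--$\Delta g$ part is at most $C\Delta t_i\Delta t_{i'}/(c\sqrt{\Delta t_i\Delta t_{i'}})\to0$, by Lemma~\ref{lem:increments}(3) and the lower bound in (1). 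The projection part is at most $C\Delta t_i\Delta t_{i'}$ divided by the same quantity, again by (3). Here I use that the intervals $[t_i,t_{i+1}]$ are non-overlapping. For $(\hat e_i,e_j)=(\Delta g(t_i),e_j)/\norm{e_i}$ with $e_j$ fixed, Lemma~\ref{lem:increments}(2) applied with $h=e_j$ gives convergence to $0$.

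There is one subtlety. The elements $e_j$, $j\notin M$, are not truly fixed, because the positions $t_i$ move as the small increments shrink. I will handle this by compactness. The base points range over a compact set, $t\mapsto\tilde g(t)$ is continuous into $L^2$ (Assumption~\ref{assumption:kernel}), and the convergence in Lemma~\ref{lem:increments}(2) is uniform over $h$ in a norm-compact set. That uniformity follows from uniform convergence for each fixed $h$ combined with the equi-boundedness $\norm{\Delta g}/\norm{e_i}\leqslant C$. Alternatively, I can use the explicit bound in the proof of (2), in which $\int_s^t h^2$ is uniformly small over compact families of $h$.

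\textbf{Step 2: passage to the limit.} The Gram matrix of $(\hat e_i, e_j)$ then differs from the block matrix $\mathrm{diag}(I_M, A)$ by $o(1)$ entrywise, where $A$ is the Gram matrix of the $e_j$, $j\notin M$. By continuity of the determinant, $G=\det A+o(1)$. Since $\det A$ is bounded below uniformly over the compact set of admissible positions, the relation $G(e_1,\ldots,e_{m-1})\sim G(e_j,j\notin M)\prod_{i\in M}\norm{e_i}^2$ follows. By (1), $\norm{e_i}^2$ can equivalently be replaced by $k^2(0)\Delta t_i$.

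\textbf{Role of $\delta_0$ and the main obstacle.} The condition $t_m-t_1\leqslant 1-\delta_0$ is needed exactly for the uniform lower bound on $\det A$ when $j\notin M$. If an increment ending at $t_m$ approached the one covering $1$, then $e_j$ could nearly align with $k(1,\cdot)$ and the projected Gram determinant would degenerate. The mild bound on $t_m-t_1$ prevents this and keeps the limiting configuration in a compact set on which $G(e_j,j\notin M)>0$ by Lemma~\ref{lem:Gram}, since the product of gaps including $1-s_{pm}$ stays positive. I expect the main difficulty to be this uniformity: establishing the entrywise limits and the lower bound uniformly as the non-small increments drift. I will first try to deduce it from continuity plus compactness, and if needed I will fall back on the explicit estimates in the proof of Lemma~\ref{lem:increments}.
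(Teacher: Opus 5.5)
\textbf{Verdict: same approach as the paper, with a gap in the nondegeneracy step.}

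Your asymptotic argument is the paper's argument written out in full. The paper expresses the ratio as $G(Qe_i,\ i\in M)$, with $Q$ the projection onto the orthogonal complement of $\{\Delta\tilde g(t_j),\ j\notin M\}$. This is the Schur-complement form of your $G(\hat e_i,\ i\in M;\ e_j,\ j\notin M)/G(e_j,\ j\notin M)$, and the paper uses Lemma~\ref{lem:increments} exactly as you do. You can skip the compactness detour for the cross terms $(\hat e_i,e_j)$. Since the intervals do not overlap, part (3) together with $((I-P)x,(I-P)y)=(x,y)-(x,e)(y,e)$ gives $|(\hat e_i,e_j)|\leqslant C(\Delta t_i)^{1/2}$ uniformly in the positions.

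\textbf{The gap: nondegeneracy.} Lemma~\ref{lem:Gram} gives a nontrivial bound only if the listed points are distinct \emph{and} its rank hypothesis holds. You check neither.
\begin{itemize}
\item The constraint $t_m-t_1\leqslant1-\delta_0$ does not give $t_m<1$ (take $t_1=\delta_0$, $t_m=1$). So the gap $1-s_{pm}$ you rely on can vanish, and the bound collapses to $G\geqslant0$.
\item If you instead list the point $1$ only once, you must verify the rank hypothesis. That verification is exactly where $t_1>0$ or $t_m<1$ is needed, since the integer combinations are dependent when the increments telescope from $0$ to $1$.
\end{itemize}
Fixed positive lengths $\Delta t_j$ alone do not give $G(e_j,\ j\notin M)>0$.

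The missing observation is the one the paper uses. The constraint $t_m-t_1\leqslant1-\delta_0$ forces $t_1>0$ or $t_m<1$. Because $k(0)\neq0$, the functions $g(s)$, $s\in(0,1]$, are linearly independent at distinct points (proof of Lemma~\ref{lem:sigma}). Hence any relation $\sum_jc_j\Delta g(t_j)\in\operatorname{span}\{g(1)\}$ must be trivial, i.e.\ the projected increments are linearly independent.

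\textbf{The role of $\delta_0$.} With $\Delta t_j$, $j\notin M$, fixed and $M\neq\emptyset$, one automatically has $\sum_{j\notin M}\Delta t_j<1$. So every limiting configuration has $t_1>0$ or $t_m<1$, and $\det A$ is bounded below near the limit without any $\delta_0$. The restriction is actually needed for the standing hypothesis of Definition~\ref{def:strongLND}: the full determinant $G(\Delta\tilde g(t_1),\ldots,\Delta\tilde g(t_{m-1}))$ must be nonzero at \emph{every} admissible configuration. This fails at $t_1=0$, $t_m=1$ (Remark~\ref{rem:slnd_restriction}), and your proposal never verifies it. The linear-independence argument above, applied with $M=\emptyset$, covers this as well.
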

\begin{proof}[Proof of Lemma~\ref{lem:slnd}]
Let $M\subseteq\{1,\ldots,m-1\}$ and write $e_i=\Delta\tilde{g}(t_i)/\norm{\Delta\tilde{g}(t_i)}$ for $i\in M$. Then,
$$
\frac{G(\Delta\tilde{g}(t_1),\ldots,\Delta\tilde{g}(t_{m-1}))}{G(\Delta\tilde{g}(t_i),\ i\notin M)\prod_{i\in M}\norm{\Delta\tilde{g}(t_i)}^2}=G(Qe_i,\ i\in M),
$$
where $Q$ is the orthogonal projection onto the orthogonal complement of the span of the vectors $\Delta\tilde{g}(t_i)$, $i\notin M$. If $t_1<\ldots<t_m$ and $t_m-t_1\leqslant1-\delta_0$, then $t_1>0$ or $t_m<1$, and in either case the increments $\Delta\tilde{g}(t_i)$, $i=1,\ldots,m-1$, are linearly independent (because $k(0)\neq0$, the functions $g(s)$, $s\in(0,1]$, at distinct points are linearly independent). Since $((I-P)x,(I-P)y)=(x,y)-(x,e)(y,e)$ with $e=k(1,\cdot)/\norm{k(1,\cdot)}$, Lemma \ref{lem:increments}(1),(3) shows that the vectors $e_i$, $i\in M$, become asymptotically orthonormal as $\max_{i\in M}\Delta t_i\to0$, and parts (2) and (3) of Lemma \ref{lem:increments} show that they become asymptotically orthogonal to the increments over the remaining intervals. Then,  arguments similar to those used in  \cite{Dorogovtsev2011regularization} then give $G(Qe_i,\ i\in M)\to1$.
\end{proof}
\begin{remark}
\label{rem:slnd_restriction}
Note that the restriction $t_m-t_1\leqslant1-\delta_0$ cannot be dropped. Since $\tilde{g}(0)=\tilde{g}(1)=0$, the increments of $\tilde{g}$ over a partition of $[0,1]$ sum to zero. For instance, if $m=3$, $t_1=0$, $t_3=1$ and $M=\{1\}$, then $\Delta\tilde{g}(t_1)+\Delta\tilde{g}(t_2)=0$, so the left-hand side in Definition \ref{def:strongLND} vanishes, while the right-hand side equals $\norm{\Delta\tilde{g}(t_2)}^2\norm{\Delta\tilde{g}(t_1)}^2>0$ for every $t_2\in(0,1)$.
\end{remark}

In order to obtain the Rosen renormalisation, we need the notion of the Fourier--Wiener transform (also known as the $S$-transform in white noise analysis, see \cite{HidaKuoPotthoffStreit1993} or \cite[Definition 5.8, p.39]{Kuo1996WhiteNoise}). Consider two independent white noises $\xi_1,\ \xi_2$ in $L^2([0,1])$ generated by two independent Wiener processes $w_1,\ w_2.$ Let us first define the stochastic exponential \cite[Chapter 3]{Janson1997GHS}. 

  \begin{definition}[Stochastic exponential]\label{def:stochastic_exponent}
    For two elements $h_1, h_2 \in L^2([0,1])$, the stochastic exponential is defined as 
    \begin{align*}
      \mathcal{E}(h_1, h_2) \coloneq  \exp\left((h_1, \xi_1) + (h_2, \xi_2) - \frac{1}{2} \left(\norm{h_1}^2 + \norm{h_2}^2 \right)\right).
    \end{align*}
  \end{definition}

  Let $\theta$ be a square-integrable random variable that is measurable with respect to $\xi_1,\ \xi_2$. Then, its Fourier--Wiener transform is defined as follows.
  \begin{definition}[Fourier--Wiener transform]\label{def:fourier_wiener}
    The Fourier--Wiener transform of $\theta$ is defined as 
    \[
      \mathcal{T}(\theta)(h_1, h_2) \coloneq  \Eof{\theta \mathcal{E}(h_1, h_2)},\qquad h_1,h_2\in L^2([0,1]).
    \]
  \end{definition}
 Considering the Fourier--Wiener transform of the approximations $T^{\eta}_{\ve,m}$ and formally passing to the limit, we conclude that the formal Fourier--Wiener transform of the random variable 
$T_{m}^{\eta}$ has the form
\begin{multline}
\label{eq4.3}
\int_{\Delta_m}\frac{1}{(2\pi)^{m-1}G((I-P)\Delta g(t_1),\ldots,(I-P)\Delta g(t_{m-1}))}
\\
\times
\myExp{-\frac{1}{2}(\|\hat{P}_{t_1\ldots t_m}h_1\|^2+\|\hat{P}_{t_1\ldots t_m}h_2\|^2)}\,\differential{t_1}\,\differential{t_2}\cdots\differential{t_m},
\end{multline}
where $\hat{P}_{t_1\ldots t_m}$ is the orthogonal projection onto the linear span of the vectors $(I-P)\Delta g(t_i)$, $i=1,\ldots,(m-1)$. Unfortunately, the integral \eqref{eq4.3} diverges. Therefore, we need to construct a regularisation of this divergent integral, which we do next. Let $\tilde{\Delta} g(t_i)$, $i=1,\ldots,m-1$, be the orthogonal system obtained from the vectors $(I-P)\Delta g(t_i)$, $i=1,\ldots,m-1$, by the Gram--Schmidt orthogonalisation procedure, and let $P_{\tilde{\Delta} g(t_i)}$ be the orthogonal projection onto the linear span generated by $\tilde{\Delta} g(t_i)$. Since
$$
G((I-P)\Delta g(t_1),\ldots,(I-P)\Delta g(t_{m-1}))=\prod^{m-1}_{i=1}\|\tilde{\Delta} g(t_i)\|^2
$$
and $$\hat{P}_{t_1\ldots t_m}=\sum^{m-1}_{i=1}P_{\tilde{\Delta} g(t_i)},$$ the integrand in \eqref{eq4.3} factorises over $i=1,\ldots,m-1$. The following theorem describes the Fourier--Wiener transform of the Rosen renormalised $m$-multiple self-intersection local time of the planar Volterra Gaussian bridge $\eta$.
\begin{theorem}
\label{thm:Fourier_Wiener} Let the kernel $k$ be as in \eqref{eq:stationary_kernel}. Then, the Rosen renormalised self-intersection local time of multiplicity $m$ for the Volterra Gaussian bridge $\eta$ exists for any $m\geqslant 2$, \emph{i.e.}, the integral 
\begin{multline*}
\int_{\Delta_m}\frac{1}{(2\pi)^{m-1}}\prod^{m-1}_{i=1}\frac{1}{\|\tilde{\Delta} g(t_i)\|^2}
\left(\myExp{-\frac{1}{2}(\|P_{\tilde{\Delta} g(t_i)} h_1\|^2+\|P_{\tilde{\Delta} g(t_i)} h_2\|^2)}-1\right)\differential{t_1}\cdots\differential{t_m}
\end{multline*}
converges for any $h_1,h_2\in L^2([0,1])$. 
\end{theorem}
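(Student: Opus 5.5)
The plan is to bound the integrand by an integrable function and then apply dominated convergence, or simply absolute integrability. First I reduce to a single factor. Put $\tilde e_i=\tilde\Delta g(t_i)/\norm{\tilde\Delta g(t_i)}$. Since $P_{\tilde\Delta g(t_i)}h=(h,\tilde e_i)\tilde e_i$ and $|e^{-x}-1|\leqslant\min(1,x)$ for $x\geqslant0$, the $i$-th factor of the integrand is bounded in absolute value by
\begin{align*}
\frac{1}{\norm{\tilde\Delta g(t_i)}^2}\min\Bigl(1,\ \frac{(h_1,\tilde\Delta g(t_i))^2+(h_2,\tilde\Delta g(t_i))^2}{2\norm{\tilde\Delta g(t_i)}^2}\Bigr).
\end{align*}
It therefore suffices to show that the product over $i$ of these majorants is integrable over $\Delta_m$.

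The next two steps are a lower bound on the orthogonalised norms and an upper bound on the numerators.

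\emph{Lower bound.} I need $\norm{\tilde\Delta g(t_i)}^2\geqslant c\,\Delta t_i$ on the relevant part of $\Delta_m$. Here $\norm{\tilde\Delta g(t_i)}^2$ is a ratio of consecutive Gram determinants. By Lemma~\ref{lem:slnd} and Lemma~\ref{lem:increments}(1), $\norm{\tilde\Delta g(t_i)}^2\sim k^2(0)\Delta t_i$ when $\Delta t_i\to0$ and $t_m-t_1\leqslant1-\delta_0$.

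\emph{Upper bound.} Write $\tilde\Delta g(t_i)=\Delta g(t_i)-R_i$, where $R_i$ is the projection of $\Delta g(t_i)$ onto the span of $e=k(1,\cdot)/\norm{k(1,\cdot)}$ and of $\Delta g(t_j)$, $j<i$. By Lemma~\ref{lem:increments}(3), the coefficients of $R_i$ are $O(\Delta t_i)$ in the normalised basis, uniformly, because asymptotic orthonormality keeps the inverse Gram matrices bounded. Hence $(h,R_i)^2\leqslant C\Delta t_i^2\norm{h}^2$. From the proof of Lemma~\ref{lem:increments},
\begin{align*}
(h,\Delta g(t_i))^2\leqslant 2\norm{k}_\infty^2\,\Delta t_i\int_{t_i}^{t_{i+1}}h^2(r)\differential{r}+2\norm{k'}_\infty^2\,\Delta t_i^2\norm{h}^2 .
\end{align*}
Consequently the $i$-th majorant is at most
\begin{align*}
C\Bigl(\frac{1}{\Delta t_i}\int_{t_i}^{t_{i+1}}(h_1^2+h_2^2)(r)\differential{r}+\norm{h_1}^2+\norm{h_2}^2\Bigr).
\end{align*}
This bound is used where $\Delta t_i$ is small. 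Where $\Delta t_i$ is not small, I use the cruder bound $1/\norm{\tilde\Delta g(t_i)}^2\leqslant C$.

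Integrating the product of these bounds reduces, by Fubini, to products of integrals of the form
\begin{align*}
\int_{s<r<t}\frac{\differential{s}\differential{t}}{t-s}=\int_0^r\log\frac{1-s}{r-s}\differential{s},
\end{align*}
which is bounded uniformly in $r$. Integrating then against $h_1^2+h_2^2$ gives a finite result. Distinct factors $i$ involve disjoint intervals, so the product structure causes no extra difficulty. Note that the logarithmic divergence seen in \eqref{eq4.3} is removed exactly by the ``$-1$'' in the integrand.

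The main obstacle is making the lower bound and the control of $R_i$ uniform on all of $\Delta_m$. Lemma~\ref{lem:slnd} is only asymptotic, and Remark~\ref{rem:slnd_restriction} shows it fails near $t_1=0$, $t_m=1$, where the bridge increments become linearly dependent. My first attempt is to split $\Delta_m$ into $\{t_m-t_1\leqslant1-\delta_0\}$ and its complement. On the first piece, compactness together with the asymptotics of Lemma~\ref{lem:slnd} give uniform constants. On the complement, I use Lemma~\ref{lem:Gram} with $\zeta=1$ applied to $G(\Delta u_1(t_1),\ldots,\Delta u_1(t_{k}),u_1(1))$, which gives $\prod_i\norm{\tilde\Delta g(t_i)}^2\geqslant c\prod_j(s_j-s_{j-1})$. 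The gaps $t_1$ and $1-t_m$ then enter only to the power $-1$ over a region of small volume. If that region still produces a log-divergent factor, I would instead handle the last increment by using $\sum_i\Delta\tilde g=\tilde g(t_m)-\tilde g(t_1)$ and exploiting that $\tilde g(0)=\tilde g(1)=0$.
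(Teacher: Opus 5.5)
Away from the corner, on $\{t_m-t_1\leqslant 1-\delta_0\}$, your argument follows the route the paper indicates for this omitted proof. There Lemmas~\ref{lem:increments} and~\ref{lem:slnd} give $\norm{\tilde{\Delta} g(t_i)}^2\asymp\Delta t_i$ and $R_i=O(\Delta t_i)$, and the ``$-1$'' reduces everything to the Wiener-type bound $\int_{s<r<t}(t-s)^{-1}\differential{s}\differential{t}\leqslant C$. Two details need care in this region.
First, Lemma~\ref{lem:slnd} is a pointwise asymptotic statement for frozen remaining increments. The uniform constants should therefore come from the quantitative estimate in Lemma~\ref{lem:increments}(3), together with compactness for the increments bounded away from zero.
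Second, consecutive factors share the variables $t_i$. The uniform bound for the product follows, for example, from $t_{i+1}-t_i\geqslant2\sqrt{(t_{i+1}-r_i)(r_i-t_i)}$.
The genuine gap is the corner $\{t_1+1-t_m<\delta_0\}$. Lemma~\ref{lem:slnd} explicitly excludes it (Remark~\ref{rem:slnd_restriction}), so it needs its own estimate.

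Your first attempt in the corner fails. Lemma~\ref{lem:Gram} works by conditioning on the past $\sigma$-field, so it only sees individual gaps. It yields at best $\prod_i\norm{\tilde{\Delta} g(t_i)}^2\geqslant c\,(1-t_m)\prod_i\Delta t_i$ (as stated, with an extra factor $t_1$). A single gap to the power $-1$ is not integrable on any neighbourhood, however small its volume, since $\int_0^{\delta_0}x^{-1}\differential{x}=\infty$. Moreover, a bound on the product alone does not let you use the ``$-1$'' for small $\Delta t_i$, and $\prod_i(\Delta t_i)^{-1}$ is not integrable either.

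Your other two ingredients also break down there. For the Brownian bridge ($k\equiv1$) one computes exactly
\[
\norm{\tilde{\Delta} g(t_i)}^2=\Delta t_i\,\frac{t_1+1-t_{i+1}}{t_1+1-t_i},\qquad \norm{R_i}^2=\frac{(\Delta t_i)^2}{t_1+1-t_i}.
\]
So $\norm{\tilde{\Delta} g(t_{m-1})}^{-2}$ blows up as $t_1\to0$, $t_m\to1$ even when $\Delta t_{m-1}$ is of order one, which defeats your ``crude bound''. Likewise $R_i$ is not $O(\Delta t_i)$ uniformly.

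For the Brownian bridge the integral still converges near the corner, because the factors telescope: $G((I-P)\Delta g(t_1),\ldots,(I-P)\Delta g(t_{m-1}))=(t_1+1-t_m)\prod_i\Delta t_i$. The degeneracy is governed by the \emph{sum} $t_1+(1-t_m)$, and $(x+y)^{-1}$ is integrable on $[0,1]^2$. This is exactly the mechanism of Lemma~\ref{lem:sigma}, which is the case $m=2$.

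What is missing is a multi-increment version of Lemma~\ref{lem:sigma}. For example, you would want two-sided bounds $G((I-P)\Delta g(t_1),\ldots,(I-P)\Delta g(t_i))\asymp(t_1+1-t_{i+1})\prod_{j\leqslant i}\Delta t_j$ for the initial segments. Then $\norm{\tilde{\Delta} g(t_i)}^2$, a ratio of consecutive such determinants, is comparable to the Brownian-bridge expression above. You also need corresponding bounds on $(h,R_i)$, and a case analysis showing that the corner and small-$\Delta t_i$ singularities are jointly integrable. Cruder factorwise bounds such as $\Delta t_i\,(t_1+1-t_{i+1})$ lose the telescoping and do not suffice already for $m=3$.

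Your fallback via $\tilde g(0)=\tilde g(1)=0$ only gives $\norm{\tilde g(t_m)-\tilde g(t_1)}^2\leqslant C(t_1+1-t_m)$. That is an upper bound on the degeneracy, whereas a lower bound is what is required.
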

We omit the proof of Theorem~\ref{thm:Fourier_Wiener} since it can be obtained 
by applying Lemma~\ref{lem:increments} and Lemma~\ref{lem:slnd} and using similar arguments as those used in the proof of \cite[Theorem 4.1]{Izyumtseva2026VGP}. Note that  Theorem~\ref{thm:Fourier_Wiener} extends the well-known Varadhan--Rosen renormalisation (see \cite{Varadhan1969}, \cite{rosen1986renormalized}, \cite{LeGall1985}) of self-intersection local times of the planar Wiener process to planar strongly locally nondeterministic Volterra Gaussian bridges conditioned to be $0$ at time $1$. 
\section{Asymptotics of the conditional moments under $(2,\zeta)$-local nondeterminism}
 \label{sec:ACM} 
Let $\{u(t)$ : $t\in[0,1]\}$ be a $(2,\zeta)$-locally nondeterministic Volterra 
Gaussian process in $\mathbb{R}^d$ with $\zeta\in(0,\frac{2}{d})$, and consider the formal expression 
\[
T^u_m = \int_{\Delta_m}\prod_{i=1}^{m-1}
\delta_0(u(t_{i+1})-u(t_i))\differential{t_1}\cdots\differential{t_m}.
\]
In addition to the requirement in \eqref{eq:LND}, we assume that there exist constants $c_0,C_0>0$ such that,  for all $0\leqslant s<t\leqslant 1,$
\begin{align}
\left|\int_0^1(k(t,r)-k(s,r))k(1,r)\differential{r}\right|&\geqslant c_0(t-s),\label{eq:drift}\\
\int_0^1(k(t,r)1_{[0,t]}-k(s,r)1_{[0,s]})^2\differential{r}&\leqslant C_0(t-s)^\zeta.\label{eq:upper}
\end{align}
Recall that $k(t,r)=0$ for $r>t$ since $k$ is a Volterra kernel. In probabilistic terms, the inequality in \eqref{eq:drift} means that $|\Cov(u_1(t)-u_1(s),u_1(1))|\geqslant c_0(t-s)$, \emph{i.e.}, the mean $q(s,t)a$ of the increment $\eta^a(t)-\eta^a(s)$ grows at least linearly in $t-s$. On the other hand, the inequality in \eqref{eq:upper} means that $\Eof{(u_1(t)-u_1(s))^2}\leqslant C_0(t-s)^\zeta$, which together with \eqref{eq:LND} gives $\Eof{(u_1(t)-u_1(s))^2}\asymp(t-s)^\zeta$.

\begin{remark}
\label{rem:fBm}
Conditions \eqref{eq:drift} and \eqref{eq:upper} hold for the fractional Brownian motion $B^H$ with Hurst index $H\in(0,1)$, represented as a Volterra Gaussian process via the Molchan--Golosov kernel \cite{MolchanGolosov1969}, \cite{NorrosValkeilaVirtamo1999}. It is, in fact,  $(2,2H)$-locally nondeterministic. The Wiener process of the Molchan--Golosov representation generates the same filtration as $B^H$ \cite[Theorem 3.2]{NorrosValkeilaVirtamo1999}, so that $$\Var(B^H(t)\mid\cF^w_s)=\Var(B^H(t)\mid \sigma(B^H(r),\ r\leqslant s)) \geqslant c(t-s)^{2H}$$ for $0\le s < t\le 1$,  by the strong local nondeterminism of $B^H$ \cite[Lemma 7.1]{Pitt1978}. Indeed, $\Eof{(B^H(t)-B^H(s))^2}=(t-s)^{2H}$, and $\Cov(B^H(t)-B^H(s),B^H(1))=\frac{1}{2}(f(t)-f(s))$ with $f(x)=x^{2H}-(1-x)^{2H}$. Now, since $f'(x)=2H(x^{2H-1}+(1-x)^{2H-1})\geqslant2H$ on $(0,1)$, the condition \eqref{eq:drift} holds with $c_0=H$. On the other hand, \eqref{eq:drift} fails if $t\mapsto\Cov(u_1(t),u_1(1))$ has a critical point in $[0,1]$. For example, for $k(t,s)=(1-(t-s))1_{[0,t]}(s)$, one has $\Cov(u_1(t),u_1(1))=\frac{t^2}{2}-\frac{t^3}{6}$, whose derivative vanishes at $t=0$.
\end{remark}

 To describe the asymptotics of the conditional moments $\Eof{(T^u_m)^p\mid u(1)=a}$ for  $p\in\mbN$ as $\norm{a}\to\infty$, we need the following technical lemmas, which are of independent interest. 
 
 \subsection{Some useful technical lemmas}
 Recall that,  for $m\geqslant 2$, 
 \begin{align*}
\Delta_m &=\{(t_1,\dots,t_m)\in\mathbb{R}^m:\ 0\le t_1\le\dots\le t_m\le1\},
\end{align*}
and $\Delta_m^p =\Delta_m\times\cdots\times\Delta_m $ ($p$ factors).
A point
$t=(t_1,\dots,t_{pm})\in\Delta_m^p$ consists of the $p$ blocks
$(t_{(b-1)m+1},\dots,t_{bm})$, $b=1,\dots,p$, and
$
S=\{1,\dots,pm-1\}\setminus\{m,2m,\dots,(p-1)m\}
$
is the set of indices of the increments within the blocks.
For $t\in\Delta_m^p$,  let $s_1\le\dots\le s_{pm}$ be the order statistics of
$t_1,\dots,t_{pm}$. Put $s_0=0$, $s_{pm+1}=1$.

\begin{lemma}\label{lem:gaps}
Let $p\in\mbN$, $m\ge2$, $\tau\in(0,1)$ and $\chi>0$. Further, let 
$$
F_\tau(t)=\prod_{j=1}^{pm+1}(s_j-s_{j-1})^{-\tau}\in(0,\infty],
$$
with the convention $0^{-\tau}=\infty$.
Then, there exists $C<\infty$, depending only on $p$, $m$, $\tau$ and $\chi$, such that for all $\varepsilon\in(0,1]$
\begin{equation}\label{eq:gaps}
\int_{\Delta_m^p}F_\tau(t)\exp\Bigl(-\sum_{i\in S}\Big(\frac{t_{i+1}-t_i}{\varepsilon}\Big)^{\chi}\Bigr)\,\differential{t}
\le C\,\varepsilon^{p(m-1)(1-\tau)}.
\end{equation}
\end{lemma}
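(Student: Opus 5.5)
We plan to split $\Delta_m^p$ according to the relative order of the $pm$ points and pass to gap coordinates. Up to a null set the points $t_1,\dots,t_{pm}$ are distinct. Hence $\Delta_m^p$ is, up to a null set, the union of finitely many regions $R_\sigma$, one for each interleaving $\sigma$ of the $p$ blocks (orderings of $\{1,\dots,pm\}$ preserving the order inside each block). It suffices to prove \eqref{eq:gaps} on each $R_\sigma$, with a constant depending on $\sigma$; there are at most $(pm)!$ of them.

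On $R_\sigma$ we change variables to the gaps $g_j=s_j-s_{j-1}$, $j=1,\dots,pm$, with $g_{pm+1}=1-\sum_{j\leqslant pm}g_j$. The Jacobian is $1$, and $R_\sigma$ becomes the simplex $\{g_j\geqslant0,\ \sum_{j=1}^{pm+1}g_j=1\}$. For each $i\in S$ the increment $t_{i+1}-t_i$ equals $\sum_{j\in I_i}g_j$ for an integer interval $I_i\subseteq\{2,\dots,pm\}$ determined by $\sigma$. Since $\chi>0$, we have $(\sum_{j\in I_i}g_j)^\chi\geqslant g_{j}^\chi$ for any $j\in I_i$.

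The key combinatorial step is to choose distinct representatives $j(i)\in I_i$, $i\in S$, which we do with Hall's marriage theorem. Take any $K\subseteq S$ and consider the union of the intervals $I_i$, $i\in K$, viewed as a union of segments on the line. Let $D$ be one connected component, containing $n_D$ of the sorted points. Then $D$ contains exactly $n_D-1$ gaps. Each increment in $K$ lying in $D$ joins two consecutive points of one block, both lying in $D$. If $b_D\geqslant1$ blocks meet $D$, the number of such increments is therefore at most $n_D-b_D\leqslant n_D-1$. Summing over components gives $|K|\leqslant|\bigcup_{i\in K}I_i|$, so Hall's condition holds. Consequently
\[
\exp\Bigl(-\sum_{i\in S}\Bigl(\frac{t_{i+1}-t_i}{\varepsilon}\Bigr)^{\chi}\Bigr)\leqslant\prod_{j\in J}e^{-(g_j/\varepsilon)^\chi},\qquad J=\{j(i):i\in S\},\quad |J|=p(m-1).
\]

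It remains to integrate $\prod_{j=1}^{pm+1}g_j^{-\tau}\prod_{j\in J}e^{-(g_j/\varepsilon)^\chi}$ over the simplex. On the simplex some gap $g_{j_0}$ satisfies $g_{j_0}\geqslant 1/(pm+1)$, so we split the simplex into the $pm+1$ pieces according to such a $j_0$. On each piece we bound $g_{j_0}^{-\tau}\leqslant(pm+1)^\tau$ and treat $g_{j_0}$ as the dependent coordinate. The remaining $pm$ gaps then range over a subset of $[0,1]^{pm}$, where the integral factorises into one-dimensional integrals of two kinds:
\[
\int_0^1g^{-\tau}\,dg=\frac{1}{1-\tau},\qquad \int_0^1g^{-\tau}e^{-(g/\varepsilon)^\chi}\,dg\leqslant\varepsilon^{1-\tau}\int_0^\infty x^{-\tau}e^{-x^\chi}\,dx=C_{\tau,\chi}\,\varepsilon^{1-\tau}.
\]
If $j_0\notin J$, the $p(m-1)$ indices of $J$ each yield a factor $\varepsilon^{1-\tau}$, which gives the bound $C\varepsilon^{p(m-1)(1-\tau)}$. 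If $j_0\in J$, we lose one such factor, but gain $e^{-(g_{j_0}/\varepsilon)^\chi}\leqslant\exp(-((pm+1)\varepsilon)^{-\chi})\leqslant C'\varepsilon^{1-\tau}$ uniformly in $\varepsilon\in(0,1]$. Summing over $j_0$ and $\sigma$ proves \eqref{eq:gaps}.

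The main obstacle is the interleaving of blocks. A small within-block increment can straddle points of other blocks, so the exponential factor does not control any single gap directly. The distinct-representative argument is what lets each of the $p(m-1)$ increments produce its own $\varepsilon^{1-\tau}$. The boundary gaps $g_1$ and $g_{pm+1}$ are never in any $I_i$; they only contribute integrable singularities and are handled by the largest-gap splitting.
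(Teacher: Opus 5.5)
Your proof is correct. Its skeleton matches the paper's: cells by relative order, unit-Jacobian gap coordinates, one distinct gap per within-block increment, and factorisation into one-dimensional integrals. You handle both key steps differently, though.

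For the distinct representatives, the paper needs no matching theory. It takes the gap immediately to the left of the right endpoint, $\iota(i)=\pi(i+1)$, where $\pi(l)$ is the rank of $t_l$. This gap lies in your $I_i$ because $\pi(i)<\pi(i+1)$, and $\iota$ is injective because distinct $i\in S$ have distinct right endpoints $t_{i+1}$. So the interleaving you flag as the main obstacle dissolves. Your Hall argument is valid (the count $n_D-b_D\le n_D-1$ per component is right), but it is heavier than needed.

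For the integration, the paper fixes the $p(m-1)$ selected gaps and integrates the remaining $p$ free gaps by the Dirichlet formula. This gives $D\varrho^{e}$ with $\varrho=1-\sum_{j\in R}g_j$ and $e=(p+1)(1-\tau)-1$. Because $e$ can be negative, the paper then splits according to whether $\sum_{j\in R}g_j\le\frac12$. On the complement it uses that some selected gap exceeds $1/(2p(m-1))$, so the exponential factor is super-polynomially small.

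Your largest-gap device is different. You pick $g_{j_0}\ge 1/(pm+1)$, bound its singular factor by a constant, make it the dependent coordinate, and drop the simplex constraint to integrate over $[0,1]^{pm}$. This avoids both the Dirichlet formula and the possible blow-up of $\varrho^{e}$, and is arguably more elementary. The paper's route yields explicit Gamma-function constants, which your argument does not need. Both give the same exponent $p(m-1)(1-\tau)$.
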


\begin{proof}[Proof of Lemma~\ref{lem:gaps}]
  First, note that outside a Lebesgue null set, the points $t_1,\dots,t_{pm}$ are distinct and lie
in the open interval $(0,1)$. Hence, up to a null set, $\Delta_m^p$ is the disjoint union of finitely
many open cells $O_\sigma$. Here $\sigma$ runs over the relative orders of
$t_1,\dots,t_{pm}$ that are compatible with $t_{(b-1)m+1}<\dots<t_{bm}$,
$b=1,\dots,p$. Fix such a $\sigma$. For $t\in O_\sigma$,  let $\pi(l)$ be the
position of $t_l$ in the increasing rearrangement, so that $t_l=s_{\pi(l)}$, and put
$g_j=s_j-s_{j-1}$, $j=1,\dots,(pm+1)$. The map $\pi$ depends only on $\sigma$. For
$i\in S$,  we have $\pi(i)<\pi(i+1)$ and
\begin{equation}\label{eq:incr-gaps}
t_{i+1}-t_i=\sum_{j\in J_i}g_j,\qquad J_i=\{\pi(i)+1,\dots,\pi(i+1)\}\subseteq\{2,\dots,pm\}.
\end{equation}
For $i\in S$,  put $\iota(i)=\pi(i+1)$. Since $\pi(i)<\pi(i+1)$, we have
$\iota(i)\in J_i$, and $\iota\colon S\to\{2,\dots,pm\}$ is injective because the map
$\pi$ is. Let $R=\iota(S)$, so that $|R|=p(m-1)$, and let
$N=\{1,\dots,pm+1\}\setminus R$. Then, $|N|=p+1$ and $1,pm+1\in N$. By
\eqref{eq:incr-gaps}, $t_{i+1}-t_i\ge g_{\iota(i)}$ for $i\in S$.

The map $t\mapsto(g_1,\dots,g_{pm})$ is a linear bijection of $O_\sigma$ onto the
open simplex
\[
\Sigma=\{(g_1,\dots,g_{pm}):\ g_j>0,\ g_1+\dots+g_{pm}<1\}.
\]
Its Jacobian has modulus one, and $g_{pm+1}=1-\sum_{j\le pm}g_j$. Therefore, we have 
\[
\int_{O_\sigma}F_\tau(t)\,e^{-\sum_{i\in S}((t_{i+1}-t_i)/\varepsilon)^{\chi}}\,\differential{t}
\le\int_\Sigma\prod_{j=1}^{pm+1}g_j^{-\tau}\prod_{j\in R}e^{-(g_j/\varepsilon)^{\chi}}\,\differential{g_1}\cdots \differential{g_{pm}} \eqqcolon K_\sigma(\varepsilon).
\]
Fix $(g_j)_{j\in R}$ with $\varrho\coloneq1-\sum_{j\in R}g_j>0$, and integrate over the
remaining $p$ free variables $g_j$, $j\in N\setminus\{pm+1\}$, subject to
$\sum_{j\in N\setminus\{pm+1\}}g_j<\varrho$. By the Dirichlet integral formula (\emph{e.g.}, see \cite[Section 12.5]{WhittakerWatson1996}), this
integral of $\prod_{j\in N}g_j^{-\tau}$ equals
\[
D\,\varrho^{e},\qquad e=(p+1)(1-\tau)-1,\qquad D=\frac{\Gamma(1-\tau)^{p+1}}{\Gamma((p+1)(1-\tau))}.
\]
Note that $e>-1$, but $e<0$ when $\tau>p/(p+1)$, and then $\varrho^{e}$ is unbounded
as $\sum_{j\in R}g_j\to1$. We therefore split the remaining integration over
$(g_j)_{j\in R}$ according to whether $\sum_{j\in R}g_j\le\frac12$ or
$\sum_{j\in R}g_j>\frac12$.

Let us first consider the set $\{\sum_{j\in R}g_j\le\frac12\}$. We have $\varrho\in[\frac12,1]$, and
hence,  $\varrho^{e}\le\max(1,2^{-e})\le2$ because $e>-1$. Hence, this part
of $K_\sigma(\varepsilon)$ is at most
\[
2D\prod_{j\in R}\int_0^\infty g^{-\tau}e^{-(g/\varepsilon)^{\chi}}\,\differential{g}
=2D\Big(\frac1\chi\,\Gamma\Big(\frac{1-\tau}{\chi}\Big)\Big)^{p(m-1)}\varepsilon^{p(m-1)(1-\tau)}.
\]
On the complementary set $\{\sum_{j\in R}g_j>\frac12\}$, some $g_j$ with $j\in R$
exceeds $1/(2p(m-1))$, since $|R|=p(m-1)$. Hence, 
$\prod_{j\in R}\myExp{-(g_j/\varepsilon)^{\chi}}\le\exp(-(2p(m-1)\varepsilon)^{-\chi})$,
and this part of $K_\sigma(\varepsilon)$ is at most
\[
\exp\bigl(-(2p(m-1)\varepsilon)^{-\chi}\bigr)\int_\Sigma\prod_{j=1}^{pm+1}g_j^{-\tau}\,\differential{g}.
\]
The last integral is a finite Dirichlet integral, and
$$\exp(-(2p(m-1)\varepsilon)^{-\chi})\le C'\varepsilon^{p(m-1)(1-\tau)}$$
for $\varepsilon\in(0,1]$. Thus,  $K_\sigma(\varepsilon)\le C\varepsilon^{p(m-1)(1-\tau)}$, and summing over 
finitely many $\sigma$ proves the inequality in \eqref{eq:gaps}.
\end{proof}

\begin{lemma}\label{lem:block}
Assume that $u$ is $(2,\zeta)$-locally nondeterministic and let $c_\zeta$ be as in
Lemma~\ref{lem:Gram}. Assume also the inequality in \eqref{eq:upper}. Let $0\le t_1<t_2<\dots<t_m\le1$, and let $A$ and $B$ be the covariance matrices of $(\eta_1(t_{i+1})-\eta_1(t_i))_{i=1}^{m-1}$ and $(X_i)_{i=1}^{m-1}$ with $X_i=u_1(t_{i+1})-u_1(t_i)$, respectively. Write $\nu=\|k(1,\cdot)\|^2=\Var(u_1(1))$, and  define $q=(q(t_i,t_{i+1}))_{i=1}^{m-1}$ as follows 
$$
q(s,t)\coloneq\vf(t)-\vf(s)=\frac{\int^{t}_0k(1,r)k(t,r)\differential{r}-\int^{s}_0k(1,r)k(s,r)\differential{r}}{\norm{k(1,\cdot)}^2}.
$$
Then,  the following statements hold.
\begin{enumerate}[label=\textup{(\roman*)},leftmargin=*,itemsep=2pt]
\item For $i=1,\dots,m-1$,  
$c_\zeta(t_{i+1}-t_i)^\zeta\le B_{ii}\le C_0(t_{i+1}-t_i)^\zeta. $
\item For all $x\in\mbR^{m-1}$,
\[
(B^{-1}x,x)\le\rho_m^{-1}\sum_{i=1}^{m-1}\frac{x_i^2}{B_{ii}},\qquad \rho_m=\Big(\frac{c_\zeta}{C_0}\Big)^{m-1}(m-1)^{-(m-2)}.
\]
\item $A=B-\nu\,qq^{\mathsf T}$. If $A$ is nonsingular, then $\nu(B^{-1}q,q)<1$ and
$
(A^{-1}q,q)=\frac{(B^{-1}q,q)}{1-\nu(B^{-1}q,q)}.
$
\end{enumerate}
\end{lemma}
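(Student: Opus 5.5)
Part (i) is immediate from the two one-sided conditions. By definition $B_{ii}=\Var(u_1(t_{i+1})-u_1(t_i))=\int_0^1(k(t_{i+1},r)1_{[0,t_{i+1}]}-k(t_i,r)1_{[0,t_i]})^2\,\differential{r}$, so \eqref{eq:upper} gives the upper bound. For the lower bound we condition on $\cF^{w_1}_{t_i}$:
$$B_{ii}\ge\Var(X_i\mid\cF^{w_1}_{t_i})=\int_{t_i}^{t_{i+1}}k^2(t_{i+1},r)\,\differential{r}\ge c_\zeta(t_{i+1}-t_i)^\zeta.$$
Equivalently, this is Lemma~\ref{lem:Gram} with $n=1$.

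For (ii) we use the elementary bound $(B^{-1}x,x)\le\lambda_{\min}(D^{-1/2}BD^{-1/2})^{-1}\sum_i x_i^2/B_{ii}$, where $D=\mathrm{diag}(B_{ii})$. Set $R=D^{-1/2}BD^{-1/2}$; this is the correlation matrix, with unit diagonal. Its eigenvalues $\lambda_1\le\dots\le\lambda_{m-1}$ are positive and sum to $\mathrm{tr}\,R=m-1$. By AM--GM, $\prod_{j\ge2}\lambda_j\le\bigl((m-1)/(m-2)\bigr)^{m-2}\le(m-1)^{m-2}$, and therefore $\lambda_1\ge\det R/(m-1)^{m-2}$. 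It remains to bound $\det R=G(X_1,\dots,X_{m-1})/\prod_iB_{ii}$ from below. The key estimate is the lower bound on the conditional variances obtained in the proof of Lemma~\ref{lem:Gram}. Since $X_1,\dots,X_{i-1}$ are $\cF^{w_1}_{t_i}$-measurable, we have
$$\Var(X_i\mid X_1,\dots,X_{i-1})\ge\Var(X_i\mid\cF^{w_1}_{t_i})\ge c_\zeta(t_{i+1}-t_i)^\zeta.$$
Hence $G\ge c_\zeta^{m-1}\prod_i(t_{i+1}-t_i)^\zeta$. Combining this with the upper bound in (i) gives $\det R\ge(c_\zeta/C_0)^{m-1}$, and so $\lambda_1\ge\rho_m$, which is (ii). The main care needed here is to bound each conditional variance by its own gap, rather than using the cruder product over order statistics.

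For (iii) we use $\eta_1(t)=u_1(t)-\vf(t)u_1(1)$. The increment is therefore $X_i-q_iu_1(1)$, with $\Cov(X_i,u_1(1))=\nu q_i$. It follows that
$$A_{ij}=B_{ij}-2\nu q_iq_j+\nu q_iq_j=B_{ij}-\nu q_iq_j,$$
that is, $A=B-\nu qq^{\mathsf T}$. Now $B$ is positive definite by (ii). The matrix determinant lemma gives $\det A=\det B\,(1-\nu(B^{-1}q,q))$, and $A$ is positive semidefinite, being a covariance matrix. So if $A$ is nonsingular, then $\det A>0$, which forces $1-\nu(B^{-1}q,q)>0$. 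Finally, the Sherman--Morrison formula gives
$$A^{-1}=B^{-1}+\frac{\nu B^{-1}qq^{\mathsf T}B^{-1}}{1-\nu(B^{-1}q,q)}.$$
Pairing this with $q$ yields $(A^{-1}q,q)=\beta+\nu\beta^2/(1-\nu\beta)=\beta/(1-\nu\beta)$, where $\beta=(B^{-1}q,q)$.
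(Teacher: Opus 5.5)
Your proposal is correct and follows the paper's proof essentially step for step: conditioning on $\cF^{w_1}_{t_i}$ for (i), bounding the determinant of the correlation matrix via successive conditional variances and then bounding its eigenvalues by the trace for (ii), and using the matrix determinant lemma with Sherman--Morrison for (iii). One small quibble: the lower bound in (i) is not literally Lemma~\ref{lem:Gram} with $n=1$, since the bound stated there carries the extra factor $t_i^\zeta$ coming from the gap $s_1-s_0$. What you need, and what your argument actually uses, is the sharper per-gap estimate from inside that lemma's proof.
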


\begin{proof}[Proof of Lemma~\ref{lem:block}]
(i) The upper bound follows from \eqref{eq:upper}. Since the random variables $X_1,\dots,X_{i-1}$ are
$\mathcal F^{w_1}_{t_i}$-measurable,
\begin{align}
\begin{aligned}
B_{ii}{}=\Eof{(u_1(t_{i+1})-u_1(t_i))^2}
{}&\geqslant\Var(X_i\mid X_1,\dots,X_{i-1})\\
{}&\ge\Var(X_i\mid\mathcal F^{w_1}_{t_i})\\
{}&=\int_{t_i}^{t_{i+1}}k^2(t_{i+1},r)\,\differential{r}\ge c_\zeta(t_{i+1}-t_i)^\zeta. 
\end{aligned}
\label{eq:condvar}
\end{align}

(ii) Let $\Lambda=\operatorname{diag}(B_{11}^{1/2},\dots,B_{m-1,m-1}^{1/2})$ and let
$\mathcal R=\Lambda^{-1}B\Lambda^{-1}$ be the correlation matrix. By
\eqref{eq:condvar}, we have 
\[
\det B=\prod_{i=1}^{m-1}\Var(X_i\mid X_1,\dots,X_{i-1})\ge c_\zeta^{m-1}\prod_{i=1}^{m-1}(t_{i+1}-t_i)^\zeta,
\]
and by (i), $\prod_iB_{ii}\le C_0^{m-1}\prod_i(t_{i+1}-t_i)^\zeta$. Hence, 
$\det\mathcal R\ge(c_\zeta/C_0)^{m-1}$. The eigenvalues of $\mathcal R$ are
positive, and they do not exceed $\operatorname{tr}\mathcal R=m-1$. Consequently, 
the smallest eigenvalue of $\mathcal R$ is at least
$\det\mathcal R/(m-1)^{m-2}\ge\rho_m$, and
\[
(B^{-1}x,x)=(\mathcal R^{-1}\Lambda^{-1}x,\Lambda^{-1}x)\le\rho_m^{-1}|\Lambda^{-1}x|^2=\rho_m^{-1}\sum_{i}x_i^2/B_{ii}.
\]

(iii) By the representation in \eqref{eq:VGB}, $\eta_1(t_{i+1})-\eta_1(t_i)=X_i-q_iu_1(1)$ with $q_i=q(t_i,t_{i+1})$,
and $\Cov(X_i,u_1(1))=(\Delta g(t_i),k(1,\cdot))=\nu q_i$. Hence, 
$A_{ij}=B_{ij}-2\nu q_iq_j+\nu q_iq_j=B_{ij}-\nu q_iq_j$.

By the matrix determinant lemma (see, e.g., \cite[Section 0.8.5]{HornJohnson2013}), $$\det A=(1-\nu(B^{-1}q,q))\det B. $$ Here, 
$\det B>0$ by the proof of (ii), and $\det A>0$ because $A$ is a nonsingular
covariance matrix. Hence, we have  $\nu(B^{-1}q,q)<1$. The formula then follows from the
Sherman--Morrison formula \cite[Section 0.7.4]{HornJohnson2013}
$$A^{-1}=B^{-1}+\nu B^{-1}qq^{\mathsf T}B^{-1}/(1-\nu(B^{-1}q,q)).$$
\end{proof}

We are now ready to state the main result of this section, which gives the asymptotics of the conditional moments of $T^u_m$ as $\|a\|\to\infty$.

\subsection{Main theorem on asymptotics under $(2,\zeta)$-local nondeterminism}

\begin{theorem}\label{thm:4.2}
Let $\zeta\in(0,\frac2d)$, and assume that the $\mbR^d$-valued Volterra Gaussian process
$\{u(t)$ : $t\in[0,1]\}$ is $(2,\zeta)$-locally nondeterministic and
that the inequalities \eqref{eq:drift} and \eqref{eq:upper} hold with this $\zeta$. For $p\in\N$ and $m \ge 2$, put
\begin{equation}\label{eq:gamma}
\gamma_{p,m}=p(m-1)\,\frac{2-\zeta d}{2-\zeta}.
\end{equation}
Then, for all $p\in\N$, and $m \ge 2$, there exist constants $0<c\le C<\infty$ such that
\begin{equation}\label{eq:two-sided}
c\,\|a\|^{-\gamma_{p,m}}\le \E\big((T^u_m)^p\mid u(1)=a\big)\le C\,\|a\|^{-\gamma_{p,m}},
\qquad \|a\|\ge 1.
\end{equation}
In particular,
\[
\lim_{\|a\|\to\infty}\frac{\log \E\big((T^u_m)^p\mid u(1)=a\big)}{\log\|a\|}=-\gamma_{p,m}.
\]
\end{theorem}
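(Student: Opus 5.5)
By Lemma~\ref{lem:conditional} and the proof of Theorem~\ref{thm:SILTVGB}, the conditional moment has the exact integral representation
\[
\E\big((T^u_m)^p\mid u(1)=a\big)=\int_{\Delta_m^p}\frac{1}{(2\pi)^{p(m-1)d/2}G_t^{d/2}}\exp\Big(-\frac{\|a\|^2}{2}(A_t^{-1}q_t,q_t)\Big)\,\differential{t}.
\]
Both bounds come from estimating the two factors in this integrand. The natural scale is $\varepsilon=\|a\|^{-2/(2-\zeta)}$. At this scale a within-block increment of length $\Delta t$ has mean $\asymp\|a\|\Delta t$ and standard deviation $\asymp(\Delta t)^{\zeta/2}$, and these balance exactly when $\Delta t\asymp\varepsilon$. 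We then expect $\varepsilon^{p(m-1)(1-\zeta d/2)}=\|a\|^{-\gamma_{p,m}}$, which matches \eqref{eq:gamma}.

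\emph{Upper bound.} For the prefactor I will use \eqref{eq:Gt_lower}, namely $G_t^{-d/2}\le cF_{\zeta d/2}(t)$, with $\tau=\zeta d/2\in(0,1)$. For the exponent I will use a lower bound
\[
(A_t^{-1}q_t,q_t)\ge c\sum_{i\in S}(t_{i+1}-t_i)^{2-\zeta}.
\]
To get it I first reduce to the whole $p(m-1)$-vector. By Lemma~\ref{lem:block}(iii) applied to the full family of increments (the algebra $A=B-\nu qq^{\mathsf T}$ holds verbatim), $(A^{-1}q,q)\ge(B^{-1}q,q)$. Then $(B^{-1}q,q)\ge |q|^2/\lambda_{\max}(B)$. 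This is weak when the gaps differ greatly, so I will argue more carefully: $(B^{-1}q,q)=\sup_y\{2(q,y)-(By,y)\}$. Testing with $y$ supported on a single index $i$ gives $(B^{-1}q,q)\ge q_i^2/B_{ii}$, so
\[
(B^{-1}q,q)\ge\max_i q_i^2/B_{ii}\ge (p(m-1))^{-1}\sum_i q_i^2/B_{ii}.
\]
By \eqref{eq:drift}, $|q_i|\ge (c_0/\nu)\Delta t_i$, and by \eqref{eq:upper}, $B_{ii}\le C_0(\Delta t_i)^\zeta$. Hence $q_i^2/B_{ii}\ge c(\Delta t_i)^{2-\zeta}$. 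The exponent is therefore at most
\[
-c\sum_{i\in S}\big(\Delta t_i/\varepsilon\big)^{2-\zeta}
\]
with $\varepsilon=\|a\|^{-2/(2-\zeta)}$. Lemma~\ref{lem:gaps} with $\chi=2-\zeta$ and $\tau=\zeta d/2$ (after absorbing the constant $c$ into $\varepsilon$) then gives the bound $C\varepsilon^{p(m-1)(1-\zeta d/2)}=C\|a\|^{-\gamma_{p,m}}$.

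\emph{Lower bound.} I restrict the integral to a set $E_\varepsilon$ of configurations where the blocks are well separated. Place the block starts in fixed disjoint subintervals of $[\tfrac14,\tfrac34]$ of length $1/(4p)$, and require every within-block increment to satisfy $\Delta t_i\in[\varepsilon,2\varepsilon]$. Then $\vol{E_\varepsilon}\asymp\varepsilon^{p(m-1)}$. On $E_\varepsilon$ I need two estimates.
\begin{enumerate}[label=\textup{(\alph*)},leftmargin=*]
\item $G_t\le C\varepsilon^{p(m-1)\zeta}$. By Hadamard's inequality, $G_t\le\prod_{i}A_{ii}\le\prod_i B_{ii}\le C_0^{p(m-1)}\prod(\Delta t_i)^\zeta$, using \eqref{eq:upper}.
\item $\|a\|^2(A_t^{-1}q_t,q_t)\le C$. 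This is the step I expect to be the main obstacle, because it needs an upper bound on $A^{-1}$, which requires controlling the cross-block correlations and the factor $1/(1-\nu(B^{-1}q,q))$.
\end{enumerate}
For (b) I bound $(B^{-1}x,x)$ for the full vector by $\rho^{-1}\sum x_i^2/B_{ii}$, exactly as in Lemma~\ref{lem:block}(ii). The correlation-determinant argument carries over to all $p(m-1)$ increments because Lemma~\ref{lem:Gram} gives
\[
\det B\ge c\prod_j(s_j-s_{j-1})^\zeta.
\]
On $E_\varepsilon$ the large inter-block gaps are bounded below by constants, so $\det B\ge c'\prod(\Delta t_i)^\zeta$ and $\det\mathcal R\ge c''$. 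Since $|q_i|\le C\Delta t_i$ by Cauchy--Schwarz and \eqref{eq:upper}, this yields
\[
(B^{-1}q,q)\le C\sum(\Delta t_i)^{2-\zeta}\le C\varepsilon^{2-\zeta}=C\|a\|^{-2}.
\]
In particular $\nu(B^{-1}q,q)\le\tfrac12$ for $\|a\|$ large, and Lemma~\ref{lem:block}(iii) gives $\|a\|^2(A^{-1}q,q)\le 2C$.

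Combining (a) and (b), the integrand on $E_\varepsilon$ is at least $c\,\varepsilon^{-p(m-1)\zeta d/2}$. Integrating over $E_\varepsilon$ gives the lower bound $c\,\varepsilon^{p(m-1)(1-\zeta d/2)}=c\|a\|^{-\gamma_{p,m}}$. For bounded $\|a\|\ge1$, continuity and positivity of the moment fill in the constants. The logarithmic limit then follows immediately from \eqref{eq:two-sided}.
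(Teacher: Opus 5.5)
\textbf{There is a genuine gap in the lower bound.} Your upper bound is correct and is essentially the paper's argument. The paper applies Cauchy--Schwarz directly in the $A_t$-inner product, $q(t_i,t_{i+1})^2\le (A_t)_{ii}(A_t^{-1}q_t,q_t)$, which is equivalent to your route through $(A^{-1}q,q)\ge(B^{-1}q,q)\ge q_i^2/B_{ii}$.

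The gap is in step (b), at the claim ``$|q_i|\le C\Delta t_i$ by Cauchy--Schwarz and \eqref{eq:upper}''. Cauchy--Schwarz only gives
\[
\nu|q_i|=|(\Delta g(t_i),k(1,\cdot))|\le\|\Delta g(t_i)\|\,\nu^{1/2}\le (C_0\nu)^{1/2}(\Delta t_i)^{\zeta/2},
\]
and since $\zeta<2$ this is much larger than $\Delta t_i$. With this bound, $\sum_i q_i^2/B_{ii}$ is only $O(1)$. You then cannot conclude $\nu(B^{-1}q,q)\le\frac12$, and $\|a\|^2(A_t^{-1}q_t,q_t)$ may be of order $\|a\|^2$ on $E_\varepsilon$, so the lower bound collapses. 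The linear bound does not follow from the hypotheses in any other evident way either. With $\psi(t)=\Cov(u_1(t),u_1(1))$, \eqref{eq:drift} bounds $|\psi(t)-\psi(s)|$ from below, while \eqref{eq:upper} bounds it from above only by $C(t-s)^{\zeta/2}$. For fBm with $H<\frac12$, $\psi'$ blows up at the endpoints, and nothing in the assumptions gives an upper Lipschitz bound for $\psi$ on a prescribed interval such as $[\frac14,\frac34]$.

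\textbf{The missing idea is the one the paper uses.} By \eqref{eq:drift}, $\psi$ is injective, and by \eqref{eq:upper} it is continuous. Hence it is strictly monotone and, by Lebesgue's theorem, differentiable almost everywhere. Consequently, for suitable $M$ and $\eta$, the closed set $\Theta=\{\theta:\ |\psi(\theta+h)-\psi(\theta)|\le Mh \text{ for all } h\in[0,\eta]\}$ has positive measure. If a block starts at $t_1\in\Theta$ and has length at most $\eta$, then
\[
\nu|q_i|\le|\psi(t_{i+1})-\psi(t_1)|+|\psi(t_i)-\psi(t_1)|\le 2M(t_m-t_1)\lesssim\varepsilon,
\]
which is exactly the estimate you need. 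To use this with $p$ separated blocks, you would have to choose $M$ large and $\eta$ small so that $\Theta$ meets each of your $p$ start intervals in positive measure. It is simpler to follow the paper and reduce to $p=1$ by Jensen, $M_p(a)\ge M_1(a)^p$, using $\gamma_{p,m}=p\gamma_{1,m}$. This also makes your cross-block control of $\det\mathcal R$ via Lemma~\ref{lem:Gram} unnecessary, although that part of your argument is correct. With this repair, the remaining steps (Hadamard's inequality, the eigenvalue bound, Sherman--Morrison) go through as you describe.
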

\begin{proof}[Proof of Theorem~\ref{thm:4.2}]
Write $$M_p(a)=\E((T^u_m)^p\mid u(1)=a),\quad \nu=\|k(1,\cdot)\|^2, $$ and
$\Delta t_i=t_{i+1}-t_i$. We use the notation $S$, $\Delta g(t_i)$, $q(s,t)$,
$A_t$, $G_t$ and $q_t$ from the proof of Theorem~\ref{thm:SILTVGB}.

By Lemma~\ref{lem:conditional}, $M_p(a)=\E((T^{\eta^a}_m)^p)$. Since
$T^{\eta^a}_{\varepsilon,m}\to T^{\eta^a}_m$ in $L^p(\Omega, \cF, \prob)$, the computation in the proof
of Theorem~\ref{thm:SILTVGB} and the dominated convergence theorem give
\begin{equation}\label{eq:Mp}
M_p(a)=\int_{\Delta_m^p}\frac{1}{(2\pi)^{\frac{p(m-1)d}{2}}G_t^{\frac d2}}
\myExp{-\frac{\|a\|^2}{2}(A_t^{-1}q_t,q_t)}\,\differential{t_1}\cdots \differential{t_{pm}}.
\end{equation}
The integrand is strictly positive for almost every $t$ and continuous in $a$. It is
bounded by $(2\pi)^{-p(m-1)d/2}G_t^{-d/2}$, which is integrable by \eqref{eq:Gt_integrable}. Hence, $M_p$
is continuous, strictly positive and bounded on $\R^d$.

It therefore suffices to prove \eqref{eq:two-sided} for $\|a\|\ge R$ with some
$R\ge1$. Indeed, on $\{1\le\|a\|\le R\}$ both $M_p(a)$ and $\|a\|^{-\gamma_{p,m}}$
are bounded above and below by positive constants.

Fix $t$ with $G_t>0$ and $i\in S$. By the
Cauchy--Schwarz inequality for the inner product $(x,y)\mapsto(A_tx,y)$, applied to
the $i$-th unit vector $e_i$,
\[
q(t_i,t_{i+1})^2=(e_i,q_t)^2\le(A_te_i,e_i)\,(A_t^{-1}q_t,q_t).
\]
Moreover, $$(A_t)_{ii}=\|(I-P)\Delta g(t_i)\|^2\le\|\Delta g(t_i)\|^2\le C_0(\Delta t_i)^\zeta$$
by \eqref{eq:upper}, and $|q(t_i,t_{i+1})|\ge c_0\Delta t_i/\nu$ by \eqref{eq:drift}. Hence,
$(A_t^{-1}q_t,q_t)\ge c_0^2(\Delta t_i)^{2-\zeta}/(C_0\nu^2)$ for each $i\in S$.
Averaging over $i\in S$,
\begin{equation}\label{eq:exponent}
(A_t^{-1}q_t,q_t)\ge c_1\sum_{i\in S}(\Delta t_i)^{2-\zeta},\qquad c_1=\frac{c_0^2}{p(m-1)C_0\nu^2}.
\end{equation}

By the inequality in \eqref{eq:Gt_lower}, $G_t^{-d/2}\le c\,F_{\zeta d/2}(t)$ for almost
every $t$, with $F_\tau$ as in Lemma~\ref{lem:gaps}. Therefore, by \eqref{eq:Mp} and
\eqref{eq:exponent},
\[
M_p(a)\le c\int_{\Delta_m^p}F_{\zeta d/2}(t)\exp\Big(-\frac{c_1\|a\|^2}{2}\sum_{i\in S}(\Delta t_i)^{2-\zeta}\Big)\differential{t}.
\]
Now, we apply Lemma~\ref{lem:gaps} with $\tau=\frac{\zeta d}{2}\in(0,1)$, $\chi=2-\zeta$
and $\varepsilon=(c_1\|a\|^2/2)^{-1/(2-\zeta)}$. Note that $\varepsilon\le1$ for
$\|a\|^2\ge2/c_1$. This yields
\[
M_p(a)\le C\varepsilon^{p(m-1)(1-\frac{\zeta d}{2})}
=C\Big(\frac{c_1}{2}\Big)^{-\gamma_{p,m}/2}\|a\|^{-\gamma_{p,m}}.
\]

By Jensen's inequality,
$M_p(a)=\E ((T^{\eta^a}_m)^p) \ge(\E ( T^{\eta^a}_m))^p=M_1(a)^p$. Since
$\gamma_{p,m}=p\gamma_{1,m}$, it suffices to consider $p=1$. Then,
$S=\{1,\dots,m-1\}$, and $A_t$ is the matrix $A$ of Lemma~\ref{lem:block}. We write
$B_t$ for the corresponding matrix $B$.

Let $\psi(t)=\Cov(u_1(t),u_1(1))=\nu\varphi(t)$, so that $q(s,t)=(\psi(t)-\psi(s))/\nu$.
By \eqref{eq:upper}, $|\psi(t)-\psi(s)|\le\nu^{1/2}(C_0|t-s|^\zeta)^{1/2}$, so $\psi$ is
continuous. Moreover, by \eqref{eq:drift}, $\psi$ is also injective. Hence, $\psi$ is strictly monotone and,
by Lebesgue's differentiation theorem for monotone functions (see \emph{e.g.}, \cite[Theorem 3.23]{Folland1999}; also called monotone differentiation theorem \cite[Theorem 1.6.25]{Tao2011Measure}), has a finite derivative at almost every point of $(0,1)$.
For $M>0$ and $\eta\in(0,\frac12],$ put
\[
\Theta_{M,\eta}=\big\{\theta\in[0,\tfrac12]:\ |\psi(\theta+h)-\psi(\theta)|\le Mh\ \text{ for all } h\in[0,\eta]\big\}.
\]
Each set $\Theta_{M,\eta}$ is closed. Every point of $[0,\frac12)$ at which $\psi$ is
differentiable lies in $\Theta_{M,1/n}$ for some integers $M\ge1$ and $n\ge2$.
Hence, we can fix $M$ and $\eta$ such that $\Theta=\Theta_{M,\eta}$ has positive
Lebesgue measure $|\Theta|$.

For $\varepsilon\in(0,\eta/(m-1)]$,  consider the set 
\[
\mathcal D_\varepsilon=\big\{t\in\Delta_m:\ t_1\in\Theta,\ \Delta t_i\in[\tfrac\varepsilon2,\varepsilon]\ \text{ for } i=1,\dots,m-1\big\},
\qquad |\mathcal D_\varepsilon|=|\Theta|\,(\tfrac\varepsilon2)^{m-1}.
\]
Let $t\in\mathcal D_\varepsilon$. Since $t_m-t_1\le(m-1)\varepsilon\le\eta$ and
$t_1\in\Theta$, for $i=1,\ldots,(m-1),$
\[
\nu\,|q(t_i,t_{i+1})|\le|\psi(t_{i+1})-\psi(t_1)|+|\psi(t_i)-\psi(t_1)|\le2M(m-1)\varepsilon.
\]
Hence, by Lemma~\ref{lem:block}(i),(ii), we immediately have 
\[
(B_t^{-1}q_t,q_t)\le\frac{1}{\rho_m c_\zeta}\sum_{i=1}^{m-1}\frac{q(t_i,t_{i+1})^2}{(\Delta t_i)^\zeta}
\le\frac{2^{2+\zeta}M^2(m-1)^3}{\rho_m c_\zeta\nu^2}\,\varepsilon^{2-\zeta} \eqqcolon C_2\,\varepsilon^{2-\zeta}.
\]
Now, choose $\varepsilon_0\in(0,\eta/(m-1)]$ with $\nu C_2\varepsilon_0^{2-\zeta}\le\frac12$, and let
$\varepsilon\le\varepsilon_0$. For almost every $t\in\mathcal D_\varepsilon$ (namely
whenever $G_t>0$), Lemma~\ref{lem:block}(iii) gives
\[
(A_t^{-1}q_t,q_t)\le2(B_t^{-1}q_t,q_t)\le2C_2\varepsilon^{2-\zeta}.
\]
Moreover, Hadamard's inequality and $(A_t)_{ii}\le(B_t)_{ii}\le C_0(\Delta t_i)^\zeta$ give
$$G_t=\det A_t\le C_0^{m-1}\varepsilon^{(m-1)\zeta}.$$ Therefore, by \eqref{eq:Mp} with $p=1$,
\[
M_1(a)\ge\frac{|\mathcal D_\varepsilon|\;e^{-C_2\|a\|^2\varepsilon^{2-\zeta}}}{(2\pi C_0)^{\frac{(m-1)d}{2}}\,\varepsilon^{\frac{(m-1)\zeta d}{2}}}
=\frac{2^{1-m}|\Theta|}{(2\pi C_0)^{\frac{(m-1)d}{2}}}\;\varepsilon^{(m-1)(1-\frac{\zeta d}{2})}\,e^{-C_2\|a\|^2\varepsilon^{2-\zeta}}.
\]
Take $\varepsilon=\|a\|^{-2/(2-\zeta)}$, which is admissible for
$\|a\|\ge R\coloneq\varepsilon_0^{-(2-\zeta)/2}$. This gives
$$M_1(a)\ge c\,\|a\|^{-\gamma_{1,m}}$$ with
$$c=2^{1-m}|\Theta|(2\pi C_0)^{-(m-1)d/2}e^{-C_2},$$ which completes the proof.
\end{proof}

\begin{remark}\label{rem:4.3}
\begin{enumerate}[label=\textup{(\arabic*)},leftmargin=*,itemsep=2pt]
\item 
The lower bound in \eqref{eq:two-sided} shows 
 that the exponent $\gamma_{p,m}$ is optimal.
\item By Remark~\ref{rem:fBm}, the $d$-dimensional fractional Brownian motion $B^H$ with
$Hd<1$ satisfies the assumptions of Theorem~\ref{thm:4.2} with $\zeta=2H$. Hence,
\[
\E\big((T^{B^H}_m)^p\mid B^H(1)=a\big)\asymp\|a\|^{-p(m-1)\frac{1-Hd}{1-H}},\qquad\|a\|\to\infty .
\]
For $d=1$,  the exponent equals $p(m-1)$ for every $H\in(0,1)$. That is, for the one-dimensional fractional Brownian motion, the exponent does not depend on the Hurst parameter (and therefore, also the range of dependence on  history). 
\item For the one-dimensional Wiener process $w$ and $m=2$, the increment
$\eta^a(t_2)-\eta^a(t_1)$ has mean $ha$ and variance $h(1-h)$, where $h=t_2-t_1$. Hence,
\[
\E\big(T^w_2\mid w(1)=a\big)=\int_0^1\frac{1-h}{\sqrt{2\pi h(1-h)}}\;e^{-\frac{a^2h}{2(1-h)}}\,\differential{h} \sim \frac1{|a|},
\qquad |a|\to\infty,
\]
in accordance with $\gamma_{1,2}=1$. See also \cite{DorogovtsevIzyumtseva2015Properties}. 
\item Condition \eqref{eq:drift} enters the lower bound only through the monotonicity
of $t\mapsto\Cov(u_1(t),u_1(1))$.
\end{enumerate}
\end{remark}

\begin{remark}
  \label{rem:M_p_geometric_interpretation} 
  Self-intersection local times are intimately related to the geometry of the stochastic trajectories (\cite{LeGallWienersausage}). Therefore, as alluded to earlier in Remark~\ref{rem:geometric_interpretation}, our results naturally have a geometric counterpart. To be sure, the moment formula $M_p(a)$ also has a direct geometric interpretation. As before, denote by $\theta_t \in [0, \pi/2]$ the angle between $e = k(1,\cdot)1_{[0,1]}/\norm{k(1,\cdot)}$ and the linear span generated by $\{\Delta g(t_i) : \ i\in S\}$. Then, by the same reasoning as in Remark~\ref{rem:geometric_interpretation}, and Lemma~\ref{lem:block}(iii) we have 
  \begin{align*}
    (A_t^{-1}q_t,q_t) = \frac{ \cos^2(\theta_t)/\nu }{ \sin^2(\theta_t)} = \frac{\cot^2(\theta_t)}{\nu}, 
  \end{align*}
  whence the following geometric formula for the moments is immediate 
  \begin{align}\label{eq:Mp_geometric}
  \begin{aligned}
      M_p(a)&{}= {(2\pi)^{-\frac{p(m-1)d}{2}}} \int_{\Delta_m^p} \left(\vol{ \Delta g(t_i),\ i\in S} \sin(\theta_t)\right)^{-{d}} \\
&{}\quad\quad \quad\quad \quad \times 
\myExp{-\frac{\|a\|^2}{2\nu} \cot^2(\theta_t) } \differential{t_1}\cdots\differential{t_{pm}}. 
  \end{aligned}
\end{align}
From this representation of the moments in \eqref{eq:Mp_geometric}, we can see that the end-to-end distance enters the formula explicitly through the term $\myExp{-\frac{\|a\|^2}{2\nu} \cot^2(\theta_t) }$. Therefore, large end-to-end distance decreases the moments, unless $\cot(\theta_t)$ is small, \emph{i.e.}, unless $g(1)$ is nearly orthogonal to the linear span generated by the chords $\{\Delta g(t_i),\ i\in S\}$. 

\end{remark}

\section{Asymptotics of the conditional moments under strong local nondeterminism} 
\label{sec:asymptotics2}
Let $\{\eta^a(t) : t\in[0,1]\}$ be a planar Volterra Gaussian bridge conditioned to be $a$ at time $1$, defined in \eqref{eq:VGB}. We assume that $\eta^a$ is generated by a Volterra Gaussian process $u$ with a kernel $k$ as in \eqref{eq:stationary_kernel}. By  Lemma~\ref{lem:increments}, these conditions guarantee that the increments of $\eta^a$ behave like the increments of a Wiener process on small time intervals. As is the case for  the planar Wiener process (see \cite{Varadhan1969}, \cite{rosen1986renormalized}), the self-intersection local times $T^{\eta^a}_m$ cannot be expected to exist as limits of their approximations, and a renormalisation is needed. The Rosen renormalisation is given in Theorem \ref{thm:Fourier_Wiener}. The reason is that strongly locally nondeterministic Volterra Gaussian bridges have too many self-intersections near the diagonals of $\Delta_m$. Nevertheless, self-intersection local times for $\eta^a$ can be defined on any domain without the diagonal as the limit of approximations.   
For $\norm{a}>1$ and a control parameter $\alpha>0$, consider the domain $\Delta_2$ without the diagonal $t_1=t_2$ defined as follows:
 $$\Delta_{2,a,\alpha}=\{(t_1,t_2):\ 0\leqslant t_1\leqslant 1-\norm{a}^{-\alpha},\ t_1+\norm{a}^{-\alpha}\leqslant t_2\leqslant 1\},$$ and let
$$
T^{u}_{2,a,\alpha}=\int_{\Delta_{2,a,\alpha}}\delta_0(\eta^a(t_2)-\eta^a(t_1))\differential{t_1}\differential{t_2}.
$$
As in  Lemma~\ref{lem:conditional}, the conditional expectations given $u(1)=a$ are computed with respect to the regular conditional distribution of $u$ given $u(1)=a$, which is the law of $\eta^a$. Thus,  $\Eof{T^u_{2,a,\alpha}\mid u(1)=a}=\Eof{T^{\eta^a}_{2,a,\alpha}}$, where the right-hand side denotes the limit as $\ve\to0$ of the expectations of the approximations obtained by replacing the Dirac delta function $\delta_0$ with the Gaussian density $f_\ve$. The following statement describes the asymptotics of $\Eof{T^u_{2,a,\alpha}\mid u(1)=a}$ for large $\norm{a}.$
The following lemma shows that $\sigma^2(t_1,t_2)\coloneq\norm{(I-P)\Delta g(t_1,t_2)}^2$, the variance of each coordinate of $\eta^a(t_2)-\eta^a(t_1)$, vanishes only at the corner $(t_1,t_2)=(0,1)$ of $\Delta_2$, and it quantifies the rate. This is needed to justify the passage to the limit $\ve\to0$ in \eqref{eq:5.2} below, since $(0,1)\in\Delta_{2,a,\alpha}$. For the Brownian bridge,  one has $\sigma^2(t_1,t_2)=(t_2-t_1)(t_1+1-t_2)$, so the bound obtained is sharp.

\begin{lemma}\label{lem:sigma}
Let the kernel $k$ be as in \eqref{eq:stationary_kernel}. Then, there exists a constant $c_5>0$ such that
\[
\sigma^2(t_1,t_2)\geqslant c_5\,(t_2-t_1)\,(t_1+1-t_2), \qquad\text{for all }0\leqslant t_1<t_2\leqslant1.
\]
In particular, $\sigma^2>0$ on $\Delta_2\setminus\{(0,1)\}$, and $$\int_{\Delta_{2,a,\alpha}}\sigma^{-2}(t_1,t_2)\,\differential{t_1}\differential{t_2}<\infty$$ for all $\norm{a}>1$ and $\alpha>0$.
\end{lemma}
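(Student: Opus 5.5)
We will use the Gram-determinant identity from the proof of Theorem~\ref{thm:SILTVGB}. With $m=2$, $p=1$, it gives
\[
\sigma^2(t_1,t_2)=\frac{G\big(u_1(t_2)-u_1(t_1),\,u_1(1)\big)}{\nu},\qquad \nu=\Var(u_1(1)).
\]
The proof then reduces to a lower bound on this two-variable Gram determinant. We will split $\Delta_2$ into two regions: one where $h=t_2-t_1$ is small, and one where $h$ is bounded away from $0$. Note that the target quantity $(t_2-t_1)(t_1+1-t_2)$ is small only when $h\to0$, or when $t_1\to0$ and $t_2\to1$ simultaneously.

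\emph{Region $h\le\delta$.} Here Lemma~\ref{lem:increments}(1) applies directly. It gives $\sigma^2\ge\frac12k^2(0)h\ge\frac12k^2(0)h(t_1+1-t_2)$, since $t_1+1-t_2\le1$.

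\emph{Region $h\ge\delta$.} We apply Lemma~\ref{lem:Gram} with $\zeta=1$. This is legitimate because $u$ is $(2,1)$-locally nondeterministic, as noted after \eqref{eq:stationary_kernel}. We take the points $s_1=t_1$, $s_2=t_2$, $s_3=1$, with $s_0=0$. The variables $Y_1=u_1(s_2)-u_1(s_1)$ and $Y_2=u_1(s_3)$ are integer combinations of rank $2$ whenever $t_2<1$. Lemma~\ref{lem:Gram} then yields
\[
G\ge c_1^2\,t_1\,(t_2-t_1)\,(1-t_2).
\]
This bound is too weak: it vanishes when $t_1=0$ or $t_2=1$ separately, while the target does not. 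So we will run the conditional-variance argument from the proof of Lemma~\ref{lem:Gram} by hand, and choose which variable to condition on first.

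If $t_2<1$, write $G=\Var(Y_1)\cdot\Var(u_1(1)\mid Y_1)$. We have $\Var(u_1(1)\mid Y_1)\ge\Var(u_1(1)\mid\cF^{w_1}_{t_2})\ge c(1-t_2)$, and $\Var(Y_1)=\norm{\Delta g}^2\ge c'h$ by the proof of Lemma~\ref{lem:increments}. This gives $G\gtrsim h(1-t_2)$.

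Symmetrically, we want a bound $G\gtrsim h\,t_1$. For this we need $\Var(Y_1\mid u_1(1))$ to be bounded below by something of order $t_1$. The reason is that the increment carries information on $[0,t_1]$ that $u_1(1)$ cannot fully explain when $t_1>0$.

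Adding the two bounds, $t_1+(1-t_2)$ is exactly the target factor, and $h\ge\delta$ is harmless in this region.

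\emph{Main obstacle.} The hard step is the bound for small $t_1$ and $1-t_2$ with $h\ge\delta$, near the corner $(0,1)$. There we need
\[
G\ge c\,\big(t_1+1-t_2\big),
\]
that is, a linear rate at which $g(t_2)-g(t_1)$ becomes parallel to $g(1)$.

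First, we will treat the corner directly. We write $g(t_2)-g(t_1)=g(1)+\big(g(t_2)-g(1)\big)-g(t_1)$ and use $\norm{g(1)-g(t_2)}^2\asymp 1-t_2$ and $\norm{g(t_1)}^2\asymp t_1$; both follow from Lemma~\ref{lem:increments}(1) and its lower bound $\norm{\Delta g(s,t)}^2\ge\frac34k^2(0)\delta(t-s)$. This gives
\[
G\big(g(t_2)-g(t_1),g(1)\big)=\nu\,\norm{(I-P)\big(g(t_2)-g(1)-g(t_1)\big)}^2.
\]
Next, we expand the square. The cross term $\big(g(t_2)-g(1),g(t_1)\big)$ is $O\big(t_1(1-t_2)\big)$ by Lemma~\ref{lem:increments}(3). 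The projections onto $e$ are $O(t_1)$ and $O(1-t_2)$, so their squares are negligible for a small corner. The norms of $g(1)-g(t_2)$ and $g(t_1)$ restricted to the complement of $e$ retain order $k^2(0)(1-t_2)$ and $k^2(0)t_1$. This gives the linear lower bound in a neighbourhood of the corner.

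Finally, on the remaining compact set, where $h\ge\delta$ and we are away from the corner, $\sigma^2$ is continuous and positive. Positivity follows from the linear independence noted in Lemma~\ref{lem:slnd}. Hence $\sigma^2$ is bounded below there, and combining the regions gives the constant $c_5$.

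For the integrability claim: on $\Delta_{2,a,\alpha}$ we have $h\ge\norm{a}^{-\alpha}$, so $\sigma^{-2}\le C(t_1+1-t_2)^{-1}$. Near the corner this is integrable in two variables, since $\int\!\!\int(x+y)^{-1}\,dx\,dy<\infty$.
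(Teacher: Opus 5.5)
Your argument is correct. On the region $h\leqslant\delta$ (Lemma~\ref{lem:increments}(1)), on the compact set $\{h\geqslant\delta,\ \max(t_1,1-t_2)\geqslant\ve_0\}$ (continuity plus linear independence of the $g(s)$), and for the integrability claim, it coincides with the paper's proof. The difference is at the corner $(0,1)$.

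The paper writes $\sigma^2=\min_{c\in\mbR}\norm{\Delta g(t_1,t_2)-c\,k(1-\cdot)}^2$ and splits $[0,1]$ into $[0,t_1]$, $[t_1,t_2]$, $[t_2,1]$. It then does a case analysis on whether $|1-c|$ exceeds $\theta=|k(0)|/(2(|k(0)|+|k(1)|))$, using mean-value estimates on $k$ and several smallness conditions on $\ve_0$.

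You instead use $(I-P)g(1)=0$ and $g(0)=0$ to write $(I-P)\Delta g(t_1,t_2)=-(I-P)\big(\Delta g(t_2,1)+\Delta g(0,t_1)\big)$. That is, you replace the long chord by the two short complementary chords. Lemma~\ref{lem:increments}(1) then bounds the diagonal terms below by $\frac12k^2(0)(1-t_2)$ and $\frac12k^2(0)t_1$. Both inequalities in Lemma~\ref{lem:increments}(3) show that the projected cross term $(\Delta g(t_2,1),\Delta g(0,t_1))-(\Delta g(t_2,1),e)(\Delta g(0,t_1),e)$ is $O(t_1(1-t_2))$. On $\{t_1,\,1-t_2\leqslant\ve_0\}$ this is at most $C\ve_0(t_1+1-t_2)$, so it can be absorbed.

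Your route is shorter, reuses Lemma~\ref{lem:increments} in full, and explains where the linear rate comes from. Near the corner, the bridge increment is minus the sum of two short, asymptotically orthogonal, Brownian-like increments. This even gives $\sigma^2\sim k^2(0)(t_1+1-t_2)$ there, exactly as for the Brownian bridge. The paper's computation is longer, but it is self-contained in terms of $\norm{k}_\infty$, $\norm{k'}_\infty$ and $\int_0^1k^2$, and does not need the asymptotic orthogonality of Lemma~\ref{lem:increments}(3).

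One thing to clean up: the paragraph asserting $G\gtrsim h\,t_1$ is circular as written. The quantity $\Var(Y_1\mid u_1(1))=G/\nu=\sigma^2$ is precisely what is being bounded, and your sentence about adding the two bounds depends on it. You do not need that bound, nor the (valid) bound $G\gtrsim h(1-t_2)$, once you have the corner estimate and the compactness step. Simply delete this detour.
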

\begin{proof}[Proof of Lemma~\ref{lem:sigma}]
Write $h=t_2-t_1$, $\ve_1=t_1$, $\ve_2=1-t_2$, $K_1=\norm{k}_\infty$, $K_2=\norm{k'}_\infty$ and $\kappa_0^2=\int_0^1k^2(x)\differential{x}>0$. Let $\delta\in(0,1]$ be as in Lemma \ref{lem:increments}, and recall from its proof that $\int_0^xk^2 (r)\differential{r}\geqslant c_k x$ for $x\in[0,1]$, where $c_k=\frac34k^2(0)\delta$. Since $P$ is the orthogonal projection onto the span of $k(1-\cdot)$,
\begin{equation}\label{eq:sigma_min}
\sigma^2(t_1,t_2)=\min_{c\in\mbR}\norm{\Delta g(t_1,t_2)-c\,k(1-\cdot)}^2=\min_{c\in\mbR}\big(I_1(c)+I_2(c)+I_3(c)\big),
\end{equation}
where
\begin{align*}
I_1(c)&=\int_0^{t_1}\big(k(t_2-r)-k(t_1-r)-ck(1-r)\big)^2\differential{r},\\
I_2(c)&=\int_{t_1}^{t_2}\big(k(t_2-r)-ck(1-r)\big)^2\differential{r},\qquad\\
I_3(c)&=c^2\int_{t_2}^1k^2(1-r)\differential{r}=c^2\int_0^{\ve_2}k^2(x)\differential{x}.
\end{align*}

We proceed in steps. 
Note that, when $h< \delta$, by  Lemma~\ref{lem:increments}(1), $$\sigma^2\geqslant\frac12k^2(0)h\geqslant\frac12k^2(0)\,h\,(t_1+1-t_2),$$ because $t_1+1-t_2\leqslant1$.

Next, fix $\ve_0 \in (0, 1/4]$, and let $h \ge \delta$ and $\max\{\ve_1, \ve_2\} \ge \ve_0$. Note that the map $t\mapsto g(t)$ is continuous in $L^2([0,1])$ by Lemma \ref{lem:increments}(1). Therefore, $\sigma^2$ is also continuous on $\Delta_2$. If $0<s_1<\ldots<s_n\leqslant1$ and $\sum_jc_jg(s_j)=0$, then on $(s_{n-1},s_n)$ only $g(s_n)$ is nonzero, so $c_nk(s_n-r)=0$ for almost every $r\in(s_{n-1},s_n)$. Since $k(0)\neq0$ and $k$ is continuous, $c_n=0$, and by induction all $c_j$ vanish. Thus, we can conclude that  $g(s_1),\ldots,g(s_n)$ are linearly independent for distinct points of $(0,1]$. Since $g(0)=0$ and $\sigma^2(t_1,t_2)=0$ if and only if $g(t_2)-g(t_1)\in\operatorname{span}\{g(1)\}$, it follows (considering separately the cases $t_1>0$ and $t_1=0$) that $\sigma^2(t_1,t_2)>0$ unless $t_1=0$ and $t_2=1$. Consequently, for every $\ve_0\in(0,\frac12]$ there is $c(\ve_0)>0$ such that $\sigma^2\geqslant c(\ve_0)\geqslant c(\ve_0)\,h\,(t_1+1-t_2)$ on the compact set $\{t\in\Delta_2:\ h\geqslant\delta,\ \max(\ve_1,\ve_2)\geqslant\ve_0\}$.

Finally, let $h\ge \delta$ and $\max(\ve_1,\ve_2)<\ve_0$ with $\ve_0 \le 1/4$ so that $h=1-\ve_1 - \ve_2 \geqslant\frac12$. 
Now, put $$\theta=\frac{|k(0)|}{2(|k(0)|+|k(1)|)}\in(0,\frac12]$$ and fix $c\in\mbR$. On $[t_1,t_2]$, we write
\[
k(t_2-r)-c\,k(1-r)=(1-c)\,k(1-r)+\big(k(t_2-r)-k(1-r)\big).
\]
Since $t_2-r$ and $1-r$ lie in $[0,1]$ and differ by $\ve_2$, the mean value theorem gives
$|k(t_2-r)-k(1-r)|\leqslant K_2\ve_2$, and hence
$\norm{k(t_2-\cdot)-k(1-\cdot)}_{L^2([t_1,t_2])}\leqslant K_2\ve_2$.
Moreover, since $\ve_1+\ve_2<2\ve_0$,
\[
\int_{t_1}^{t_2}k^2(1-r)\differential{r}=\int_{\ve_2}^{1-\ve_1}k^2(r)\differential{r}
\geqslant\kappa_0^2-K_1^2(\ve_1+\ve_2)\geqslant\frac12\kappa_0^2
\qquad\text{if }\ \ve_0\leqslant\frac{\kappa_0^2}{4K_1^2}.
\]
By the reverse triangle inequality in $L^2([t_1,t_2])$ and the nonnegativity of the norm, we get
\[
I_2(c)^{1/2}\geqslant\Big(\frac{|1-c|\,\kappa_0}{\sqrt2}-K_2\ve_2\Big)_+,
\qquad\text{and hence, }\qquad
I_2(c)\geqslant\Big(\frac{|1-c|\,\kappa_0}{\sqrt2}-K_2\ve_2\Big)_+^2 .
\]


For $r\in[0,t_1],$ we have $|k(t_2-r)-k(1)|\leqslant K_2(\ve_1+\ve_2),$ $|k(t_1-r)-k(0)|\leqslant K_2\ve_1$ and $|k(1-r)-k(1)|\leqslant K_2\ve_1$, whence we get
\[
I_1(c)\geqslant\ve_1\Big(\big|(1-c)k(1)-k(0)\big|-K_2(2+|c|)(\ve_1+\ve_2)\Big)_+^2,\qquad I_3(c)\geqslant c_k\,c^2\ve_2 .
\]
If $|1-c|>\theta$, then $I_2(c)\geqslant\theta^2\kappa_0^2/8$ provided $K_2\ve_0\leqslant\theta\kappa_0/(2\sqrt2)$, and hence,  $$I_1+I_2+I_3\geqslant\frac{\theta^2\kappa_0^2}{8}\geqslant\frac{\theta^2\kappa_0^2}{8}(\ve_1+\ve_2).$$ If $|1-c|\leqslant\theta$, then $|c|\leqslant2$, $|(1-c)k(1)-k(0)|\geqslant|k(0)|-\theta|k(1)|\geqslant\frac12|k(0)|$ and $c^2\geqslant(1-\theta)^2\geqslant\frac14$. Therefore, provided $8K_2\ve_0\leqslant\frac14|k(0)|$,
\[
I_1+I_2+I_3\geqslant\frac{k^2(0)}{16}\,\ve_1+\frac{c_k}{4}\,\ve_2\geqslant\min\Big(\frac{k^2(0)}{16},\frac{c_k}{4}\Big)(\ve_1+\ve_2).
\]
Choosing $\ve_0=\min\{\frac14,\ \kappa_0^2/(4K_1^2),\ \theta\kappa_0/(2\sqrt2K_2),\ |k(0)|/(32K_2)\}$ (the conditions involving $K_2$ being void if $K_2=0$) and minimising over $c$ in the expression in \eqref{eq:sigma_min}, we obtain $\sigma^2\geqslant c'(\ve_1+\ve_2)\geqslant c'\,h\,(t_1+1-t_2)$, with the constant $c'>0$ depending only on $k$. 

This proves the inequality with $c_5=\min\{\frac12k^2(0),c(\ve_0),c'\}$.

Finally, on $\Delta_{2,a,\alpha}$, we have $h\geqslant\norm{a}^{-\alpha}$. Therefore,  $$\sigma^{-2}\leqslant c_5^{-1}\norm{a}^{\alpha}(t_1+1-t_2)^{-1},$$ and $$\int_{[0,1]^2}(x+y)^{-1}\differential{x}\differential{y}=2\log2<\infty.$$
\end{proof}

\subsection{Main theorem on asymptotics under strong local nondeterminism}
We are now ready to state and prove the main result of this section. The following theorem gives the asymptotics of the conditional moments of the self-intersection local times of a strongly locally nondeterministic Volterra Gaussian bridge, conditioned to be $a$ at time $1$, as $\norm{a}\to\infty$. In particular, it shows that the threshold $\alpha=2$ is critical for the asymptotic behaviour of the conditional moments. The conditional moment of the self-intersection local times of the bridge is finite for all $\alpha>0$, but it decays exponentially fast for $\alpha<2$, converges to a positive constant for $\alpha=2$, and grows logarithmically for $\alpha>2$.

\begin{theorem}\label{thm:5.1}
Let the kernel $k$ be as in \eqref{eq:stationary_kernel}, and put
\[
\psi(t)=\Cov(u_1(t),u_1(1))=\int_0^tk(t-r)k(1-r)\,\differential{r},\qquad\nu=\psi(1)=\|k(1,\cdot)\|^2 .
\]
Assume that there exists a constant $c_0>0$ such that
\begin{equation}\label{eq:5.1}
|\psi(t)-\psi(s)|\ge c_0(t-s),\qquad 0\le s<t\le1.
\end{equation}
Then, the following statements hold as $\|a\|\to\infty$.
\begin{enumerate}[label=\textup{(\arabic*)},leftmargin=*,itemsep=3pt]
\item If $0<\alpha<2$, there exist constants $0<c\le C<\infty$ such that, for all
sufficiently large $\|a\|$,
\[
c\,e^{-C\|a\|^{2-\alpha}}\le\E\big(T^u_{2,a,\alpha}\mid u(1)=a\big)\le C\,e^{-c\|a\|^{2-\alpha}}.
\]
\item If $\alpha=2$, then
\[
\lim_{\|a\|\to\infty}\E\big(T^u_{2,a,2}\mid u(1)=a\big)
=\frac{1}{2\pi k^2(0)}\int_0^1E_1\Big(\frac{\psi'(t)^2}{2\nu^2k^2(0)}\Big)\,\differential{t} \in(0,\infty),
\]
where $E_1(z)=\int_z^\infty x^{-1}e^{-x}\,\differential{x}$ is the exponential integral function.
\item If $\alpha>2$, then
$
\E\big(T^u_{2,a,\alpha}\mid u(1)=a\big)=\frac{\alpha-2}{2\pi k^2(0)}\,\log\|a\|+O(1).
$
\end{enumerate}
\end{theorem}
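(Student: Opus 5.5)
We start from the explicit formula for the conditional expectation. In dimension $d=2$, each coordinate of $\eta^a(t_2)-\eta^a(t_1)$ is Gaussian with mean $q(t_1,t_2)a_i$ and variance $\sigma^2(t_1,t_2)$. By Lemma~\ref{lem:sigma}, $\sigma^{-2}$ is integrable on $\Delta_{2,a,\alpha}$, so dominated convergence as $\ve\to0$ gives
\begin{equation}\label{eq:5.2}
\E\big(T^u_{2,a,\alpha}\mid u(1)=a\big)=\int_{\Delta_{2,a,\alpha}}\frac{1}{2\pi\sigma^2(t_1,t_2)}\exp\Big(-\frac{\|a\|^2q(t_1,t_2)^2}{2\sigma^2(t_1,t_2)}\Big)\differential{t_1}\differential{t_2}.
\end{equation}
Here $q(t_1,t_2)=(\psi(t_2)-\psi(t_1))/\nu$. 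Since $k\in C^1$, the function $\psi$ is $C^1$ with $\psi'(t)=k(0)k(1-t)+\int_0^tk'(t-r)k(1-r)\differential{r}$. Condition \eqref{eq:5.1} makes $\psi$ strictly monotone with $|\psi'|\ge c_0$. So $c_0h/\nu\le|q|\le\|\psi'\|_\infty h/\nu$, where $h=t_2-t_1$.

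We split $\Delta_{2,a,\alpha}$ into the region $h\le\delta$ (with $\delta$ from Lemma~\ref{lem:increments}) and the region $h>\delta$. On $h>\delta$ we have $q^2\ge c_0^2\delta^2/\nu^2$ and $\sigma^2\le C$. The exponent is then at least $c\|a\|^2$. Using the bound $\sigma^{-2}\le c_5^{-1}\|a\|^\alpha(t_1+1-t_2)^{-1}$ from the proof of Lemma~\ref{lem:sigma}, this region contributes $O(\|a\|^\alpha e^{-c\|a\|^2})$, which is negligible in all three regimes.

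On $h\le\delta$, Lemma~\ref{lem:increments}(1) gives $\sigma^2=k^2(0)h(1+o(1))$ uniformly in $t_1$, together with the two-sided bound $\tfrac12k^2(0)h\le\sigma^2\le Ch$. Hence the exponent is comparable to $\|a\|^2h$ from both sides. More precisely, it equals $\|a\|^2h\,\psi'(t_1)^2(1+o(1))/(2\nu^2k^2(0))$, using the uniform continuity of $\psi'$.

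For (1), with $\alpha<2$, bound the integrand above by $Ch^{-1}e^{-c\|a\|^2h}$ over $h\ge\|a\|^{-\alpha}$. This gives at most $C\int_{\|a\|^{-\alpha}}^{\delta} h^{-1}e^{-c\|a\|^2h}\differential{h}\le C\|a\|^{\alpha}e^{-c\|a\|^{2-\alpha}}/\|a\|^{2-\alpha}\cdot\|a\|^{-\alpha}\cdots$, which is of the form $Ce^{-c'\|a\|^{2-\alpha}}$ after shrinking $c$. For the lower bound, restrict to $h\in[\|a\|^{-\alpha},2\|a\|^{-\alpha}]$. There the integrand is at least $c\|a\|^{\alpha}e^{-C\|a\|^{2-\alpha}}$ on a set of measure of order $\|a\|^{-\alpha}$.

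For (2) and (3), change variables $x=\|a\|^2h$. The $h\le\delta$ part becomes
\[
\int_0^1\differential{t_1}\int_{\|a\|^{2-\alpha}}^{\delta\|a\|^2}\frac{1}{2\pi k^2(0)x}(1+o(1))\,e^{-x\lambda(t_1)(1+o(1))}\differential{x},\qquad \lambda(t)=\frac{\psi'(t)^2}{2\nu^2k^2(0)},
\]
up to the harmless boundary effect $t_1\le1-h$.

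When $\alpha=2$, the lower limit is $1$. The $o(1)$ terms vanish pointwise as $\|a\|\to\infty$, for each fixed $x$. Domination by $Cx^{-1}e^{-cx}$ on $[1,\infty)$ lets us pass to the limit. This gives $\frac{1}{2\pi k^2(0)}\int_0^1\int_1^\infty x^{-1}e^{-\lambda(t)x}\differential{x}\differential{t}$, and the inner integral equals $E_1(\lambda(t))$. The limit is finite and positive because $c_0^2/(2\nu^2k^2(0))\le\lambda\le C$.

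When $\alpha>2$, split the $x$-integral at $x=1$. The part $x\ge1$ is $O(1)$ by the same domination. On $[\|a\|^{2-\alpha},1]$ the exponential factor is $1+O(x)$ and the variance factor is $1+O(\omega(h))$. This gives $\frac{1}{2\pi k^2(0)}\log\|a\|^{\alpha-2}$ plus an error.

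The main obstacle is controlling that error, $\int x^{-1}\omega(x/\|a\|^2)\differential{x}$, where $\omega(h)=\sup_s|\sigma^2(s,s+h)/(k^2(0)h)-1|$. To keep it $O(1)$ one needs a rate: from the proof of Lemma~\ref{lem:increments}, $\int_0^hk^2=k^2(0)h+O(h^2)$ because $k\in C^1$, and the other two terms there are $O(h^2)$. So $\omega(h)=O(h)$, and the error integral is bounded. The region $t_1>1-h$, which is excluded from the domain, has measure $O(h)$ per unit $h$, so it contributes only $O(1)$. This yields $\frac{\alpha-2}{2\pi k^2(0)}\log\|a\|+O(1)$.
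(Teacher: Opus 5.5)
Your proposal is correct and follows essentially the same route as the paper. The common steps are the explicit formula \eqref{eq:5.2}, the split at $h=\delta$, and the two-sided comparison of the near-diagonal integrand with $h^{-1}e^{-c\|a\|^2h}$. Like the paper, you use the strip $h\in[\|a\|^{-\alpha},2\|a\|^{-\alpha}]$ for the lower bound in (1) and the scaling $h=x\|a\|^{-2}$ with dominated convergence for $\alpha=2$. For $\alpha>2$ you use the rate $\sigma^2=k^2(0)h+O(h^2)$, which is exactly the paper's bound $|\sigma^{-2}-(k^2(0)h)^{-1}|\le C_1$, together with a split at $h=\|a\|^{-2}$.

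The differences are cosmetic. You control the region $h>\delta$ through Lemma~\ref{lem:sigma} rather than through the monotonicity of $x\mapsto x^{-1}e^{-y/x}$. You also treat $\alpha>2$ as one two-sided expansion instead of separate upper and lower bounds. Your garbled display in part (1) should simply read
\[
\|a\|^{\alpha}\int_{\|a\|^{-\alpha}}^\infty e^{-c\|a\|^2h}\,\differential{h}=\frac{e^{-c\|a\|^{2-\alpha}}}{c\|a\|^{2-\alpha}},
\]
which needs no shrinking of $c$.
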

\begin{proof}[Proof of Theorem~\ref{thm:5.1}]
For $0\le t_1<t_2\le1$, write $h=t_2-t_1$, $\Delta g=\Delta g(t_1,t_2)$ and
$\sigma^2=\sigma^2(t_1,t_2)=\|(I-P)\Delta g(t_1, t_2)\|^2$. Recall that
$q=q(t_1,t_2)=(\psi(t_2)-\psi(t_1))/\nu$. For $\ve>0$ and $(t_1,t_2)\in\Delta_2$ with $\sigma^2>0$, $\Eof{f_\ve(\eta^a(t_2)-\eta^a(t_1))}$ is the integral of $f_\ve$ against the Gaussian density with mean $qa$ and covariance matrix $\sigma^2I_2$. Naturally, it converges to the value of that density at $0$ as $\ve\to0$ and is bounded by $(2\pi\sigma^2)^{-1}$, which is integrable over $\Delta_{2,a,\alpha}$ by Lemma \ref{lem:sigma}. Hence, by Fubini's theorem and dominated convergence,
\begin{equation}\label{eq:5.2}
\E\big(T^u_{2,a,\alpha}\mid u(1)=a\big)=\iint_{\Delta_{2,a,\alpha}}p_a(t_1,t_2)\,\differential{t_1}\differential{t_2},
\qquad p_a=\frac{1}{2\pi\sigma^2}\exp\Big(-\frac{\|a\|^2q^2}{2\sigma^2}\Big).
\end{equation}

By the decomposition in the proof of
Lemma~\ref{lem:increments} (1), $\sigma^2=\int_0^hk^2(x)\,\differential{x}+O(h^2)$. Moreover,
$\int_0^hk^2(x)\,\differential{x}=k^2(0)h+O(h^2)$ because $k^2$ is Lipschitz, and both $O$-terms
are uniform in $t_1$. Hence, there are $\delta\in(0,\frac12]$ and $C_1<\infty$ such that
\begin{equation}\label{eq:sigma}
\tfrac12k^2(0)h\le\sigma^2\le2k^2(0)h\quad\text{and}\quad
\Big|\frac1{\sigma^2}-\frac1{k^2(0)h}\Big|\le C_1\qquad\text{whenever } h\le\delta .
\end{equation}
By Lemma~\ref{lem:increments} (1), $\sigma^2\le\|\Delta g\|^2\le C_3h\le C_3$ for all $t_1<t_2$. By the inequality in 
\eqref{eq:5.1} and since $\psi\in C^1([0,1])$, we have 
\begin{equation}\label{eq:q}
\frac{c_0}{\nu}\,h\le|q|\le C_2h,\qquad C_2=\frac{\|\psi'\|_\infty}{\nu}.
\end{equation}
Consequently, with $c_3=c_0^2/(2\nu^2k^2(0))$ and $C_4=2C_2^2/k^2(0)$,
\begin{equation}\label{eq:expo}
c_3\|a\|^2h\le\frac{\|a\|^2q^2}{\sigma^2}\le C_4\|a\|^2h\qquad\text{whenever } h\le\delta .
\end{equation}
Finally, let $h>\delta$. Then, $q^2\ge b\coloneq c_0^2\delta^2/\nu^2$. The function
$x\mapsto x^{-1}e^{-y/x}$ is increasing on $(0,y]$, and
$\sigma^2\le C_3\le\frac12\|a\|^2q^2$ when $\|a\|^2\ge2C_3/b$. Hence,
$$p_a\le(2\pi C_3)^{-1}\myExp{-b\|a\|^2/(2C_3)}$$ on $\{h>\delta\}$. Since
$\Delta_{2,a,\alpha}$ has area at most $\frac12$, the part of \eqref{eq:5.2} over
$\{h>\delta\}$ is at most
\begin{equation}\label{eq:far}
r_a\coloneq\frac{1}{4\pi C_3}\,e^{-b\|a\|^2/(2C_3)}.
\end{equation}
By \eqref{eq:sigma}, \eqref{eq:expo}
and \eqref{eq:far}, we get 
\[
\E\big(T^u_{2,a,\alpha}\mid u(1)=a\big) - r_a \le\int_{\|a\|^{-\alpha}}^\infty\frac{e^{-c_3\|a\|^2h/2}}{\pi k^2(0)h}\,\differential{h}
=\frac{E_1(\frac{c_3}{2}\|a\|^{2-\alpha})}{\pi k^2(0)}
\le\frac{2\,e^{-\frac{c_3}{2}\|a\|^{2-\alpha}}}{\pi k^2(0)c_3\|a\|^{2-\alpha}},
\]
where we used $E_1(z)\le \myExp{-z}/z$ (which follows from \cite[\S 6.8]{DLMF}). Since $r_a\le C\myExp{-c\|a\|^2}\le C\myExp{-c\|a\|^{2-\alpha}}$,
the upper bound follows.

Let the end-to-end distance $\|a\|$ be so large that $2\|a\|^{-\alpha}\le\delta$. Note that the set
$
\{(t_1,t_1+h):\ t_1\in[0,\tfrac12],\ h\in[\|a\|^{-\alpha},2\|a\|^{-\alpha}]\}
$
is contained in $\Delta_{2,a,\alpha}$ and has area $\frac12\|a\|^{-\alpha}$. On this
set, by \eqref{eq:sigma} and \eqref{eq:expo}, we have 
\[
p_a\ge(4\pi k^2(0)h)^{-1}e^{-C_4\|a\|^2h/2}\ge(8\pi k^2(0))^{-1}\|a\|^{\alpha}e^{-C_4\|a\|^{2-\alpha}}.
\]
Hence, we get the lower estimate  $$\E(T^u_{2,a,\alpha}\mid u(1)=a)\ge(16\pi k^2(0))^{-1}e^{-C_4\|a\|^{2-\alpha}}.$$
 By \eqref{eq:far}, the part of \eqref{eq:5.2} over
$\{h>\delta\}$ tends to $0$. In the remaining part,  we substitute $h=x\|a\|^{-2}$:
\[
\iint_{\Delta_{2,a,2}\cap\{h\le\delta\}}p_a(t_1, t_2)\,\differential{t_1}\differential{t_2}
=\int_0^1\!\!\int_1^\infty\mathbf 1_{\{x\le\|a\|^2\min(\delta,1-t_1)\}}\,\Phi_a(t_1,x)\,\differential{x}\,\differential{t_1},
\]
where $$\Phi_a(t_1,x)=\|a\|^{-2}p_a(t_1,t_1+x\|a\|^{-2}).$$ With $h=x\|a\|^{-2}$,
\[
\Phi_a(t_1,x)=\frac{1}{2\pi x}\,\frac{h}{\sigma^2}\,\exp\Big(-\frac x2\Big(\frac qh\Big)^2\frac{h}{\sigma^2}\Big).
\]
Fix $t_1\in[0,1)$ and $x\ge1$, and let $\|a\|\to\infty$, so that $h\to0$. Then,
$\sigma^2/h\to k^2(0)$ by \eqref{eq:sigma}, and $q/h\to\psi'(t_1)/\nu$ because
$\psi\in C^1([0,1])$. Hence,
\[
\Phi_a(t_1,x)\to(2\pi k^2(0)x)^{-1}\exp\big(-x\psi'(t_1)^2/(2\nu^2k^2(0))\big).
\]
Moreover, $\Phi_a(t_1,x)\le(\pi k^2(0)x)^{-1}\myExp{-c_3x/2}$ by \eqref{eq:sigma} and
\eqref{eq:expo}, and this bound is integrable on $[0,1]\times[1,\infty)$. By
dominated convergence, we get 
\begin{align*}
\lim_{\|a\|\to\infty}\E\big(T^u_{2,a,2}\mid u(1)=a\big)
& =\int_0^1\!\!\int_1^\infty\frac{\myExp{-x\psi'(t)^2/(2\nu^2k^2(0))}}{2\pi k^2(0)\,x}\,\differential{x}\,\differential{t}\\
& =\frac{1}{2\pi k^2(0)}\int_0^1 E_1\Big(\frac{\psi'(t)^2}{2\nu^2k^2(0)}\Big)\differential{t} .
\end{align*}
It is straightforward to see that the limit is positive. It is also finite because $|\psi'|\ge c_0$ by \eqref{eq:5.1}.

By \eqref{eq:sigma},
$(2\pi\sigma^2)^{-1}\le(2\pi k^2(0)h)^{-1}+C_1/(2\pi)$ for $h\le\delta$. Together with
\eqref{eq:expo} and \eqref{eq:far}, this gives
\[
\E\big(T^u_{2,a,\alpha}\mid u(1)=a\big)\le\frac{1}{2\pi k^2(0)}\int_{\|a\|^{-\alpha}}^\infty\frac{e^{-c_3\|a\|^2h/2}}{h}\,\differential{h} +\frac{C_1\delta}{2\pi}+r_a
=\frac{E_1(\frac{c_3}{2}\|a\|^{2-\alpha})}{2\pi k^2(0)}+O(1).
\]
Since $E_1(z)=\log\frac1z+O(1)$ as $z\downarrow0$ (see \cite[\S 6.6]{DLMF}), and
$\log(\frac{2}{c_3}\|a\|^{\alpha-2})=(\alpha-2)\log\|a\|+O(1)$, the upper bound follows.

Let $\|a\|$ be so large that $\|a\|^{-2}\le\min\{\delta,1/(C_1k^2(0))\}$.
The set
$
\{(t_1,t_1+h):\ t_1\in[0,1-\|a\|^{-2}],\ h\in[\|a\|^{-\alpha},\|a\|^{-2}]\}
$
is contained in $\Delta_{2,a,\alpha}$. On this set,
$(2\pi\sigma^2)^{-1}\ge(2\pi k^2(0)h)^{-1}-C_1/(2\pi)\ge0$ by \eqref{eq:sigma}.
Moreover, by \eqref{eq:expo} and $e^{-y}\ge1-y$,
$$
\exp\big(-\|a\|^2q^2/(2\sigma^2)\big)\ge1-\tfrac{C_4}{2}\|a\|^2h .
$$
Hence,
\begin{align*}
\E\big(T^u_{2,a,\alpha}\mid u(1)=a\big)
&\ge(1-\|a\|^{-2})\int_{\|a\|^{-\alpha}}^{\|a\|^{-2}}\Big(\frac{1}{2\pi k^2(0)h}-\frac{C_1}{2\pi}\Big)\Big(1-\frac{C_4}{2}\|a\|^2h\Big)\differential{h}\\
&\ge(1-\|a\|^{-2})\,\frac{(\alpha-2)\log\|a\|}{2\pi k^2(0)}-\frac{C_4}{4\pi k^2(0)}-\frac{C_1}{2\pi}\|a\|^{-2}.
\end{align*}
For the second inequality, expand the product, use $\|a\|^2(\|a\|^{-2}-\|a\|^{-\alpha})\le1$
and $1-\|a\|^{-2}\le1$, and note that the cross term is nonnegative. The right-hand side equals
$\frac{\alpha-2}{2\pi k^2(0)}\log\|a\|-O(1)$, which completes the proof.
\end{proof}

\begin{remark}\label{rem:5.2}
\begin{enumerate}[label=\textup{(\arabic*)},leftmargin=*,itemsep=2pt]
\item Since $k\in C^1([0,1])$, we have $\psi\in C^1([0,1])$ with
\[
\psi'(t)=k(0)k(1-t)+\int_0^tk'(t-r)k(1-r)\,\differential{r} .
\]
Condition \eqref{eq:5.1} is equivalent to $\min_{[0,1]}|\psi'|>0$.

If $|k|$ is nondecreasing on $[0,1]$, then $|k|\ge|k(0)|>0$, so $k$ has constant
sign and $kk'\ge0$. Hence, $|\psi'(t)|\ge k^2(0)$, \emph{i.e.}, \eqref{eq:5.1} holds with
$c_0=k^2(0)$. This covers the Wiener process.

For $k(x)=1-x$, the condition \eqref{eq:5.1}  fails, since $\psi'(0)=0$.
\item For the Wiener process, $\psi(t)=t$ and $\nu=k(0)=1$. The limit in Theorem~\ref{thm:5.1}(2) is
$E_1(\frac12)/(2\pi)\approx0.0891$. For Theorem~\ref{thm:5.1}(3),  a direct computation gives
\[
\E\big(T^w_{2,a,\alpha}\mid w(1)=a\big)=\frac{\alpha-2}{2\pi}\log\|a\|+\frac{\log2-\gamma_{\mathrm E}}{2\pi}+o(1),
\]
where $\gamma_{\mathrm E}$ is  Euler's constant. See also \cite{DorogovtsevIzyumtseva2015Properties}.
\item Condition \eqref{eq:5.1} is used only for the upper bounds and for the
domination in Theorem~\ref{thm:5.1}(2). The lower bounds in Theorem~\ref{thm:5.1}(1) and Theorem~\ref{thm:5.1}(3) use only
$\psi\in C^1([0,1])$ and Lemma~\ref{lem:increments}.
\item The threshold $\alpha=2$ has an interpretation. For $h=t_2-t_1\ll\|a\|^{-2}$,
the mean $q(t_1,t_2)a$ of $\eta^a(t_2)-\eta^a(t_1)$ is of order $h\|a\|$, which is
negligible compared with its standard deviation $\approx|k(0)|h^{1/2}$. On these time
scales the bridge behaves like a planar Brownian motion with diffusion coefficient
$k^2(0)$.
For
$h\gg\|a\|^{-2}$, the drift dominates and the contribution is negligible.
\end{enumerate}
\end{remark}









\bibliographystyle{amsplain}

\bibliography{refs}

\end{document}